\documentclass[a4paper,openany,12pt]{article} 
\usepackage{amsmath,amsfonts,amssymb,amsthm}
\topmargin=0pt  
 \topmargin=-1cm
\oddsidemargin=0.46cm
\evensidemargin=0.26cm
\usepackage{amsmath,amssymb,fancyhdr, mathrsfs, graphicx, epsfig}
\usepackage{enumerate}
\textheight 230mm
\textwidth 158mm
\oddsidemargin 0cm
\newcommand{\bc}{\begin{center}}
\newcommand{\ec}{\end{center}}
\newcommand{\ba}{\begin{array}}
\newcommand{\ea}{\end{array}}

\renewcommand{\dfrac}{\displaystyle\frac }

\newcommand{\dint }{\displaystyle\int }

\newtheorem{thm}{Theorem }[section]


\newtheorem{rem}{Remark}[section]
\newtheorem{cor}{Corollary }[section]

\newtheorem{lem}{Lemma }[section]

\newtheorem{proposition}{ Proposition }[section]

\numberwithin{equation}{section}
\numberwithin{thm}{section}
\numberwithin{defn}{section}
\numberwithin{lem}{section}
\numberwithin{cor}{section}
\numberwithin{rem}{section}

\title{Existence of infinitely many  solutions for the fractional
Schr\"odinger- Maxwell equations
     \thanks{ E-mail address: jnwzl32@163.com (Z.L. Wei).} \thanks
{Research supported by  the  NSF of
Shandong  Province (ZR2013AM009). }
}

\author{Zhongli Wei$^{a,b}$ \ \  \\
\small  School of  Sciences,
Shandong Jianzhu University,\\
  \ \  \small Jinan,  Shandong, 250101,
    People's
 Republic  of  China
    \\ \small $^{b}$ School of Mathematics,
Shandong University, \\ \small Jinan, Shandong 250100,  People's
Republic of China.  .}

\date{}
\begin{document}
\maketitle

\baselineskip 18pt

\begin{abstract}
In this paper, by using variational methods and critical point
theory, we shall mainly  study  the existence of
infinitely many solutions for the following fractional Schr\"odinger-Maxwell
equations
$$
   ( -\Delta )^{\alpha} u+V(x)u+\phi u=f(x,u),  \hbox{in }   \mathbb{R}^3 ,
$$
$$
(-\triangle)^{\alpha}\phi =K_{\alpha}  u^2 \ \  \mathrm{in}\ \  \mathbb{R}^3
$$
where $\alpha \in (0,1],$  $K_{\alpha}=\dfrac{\pi^{-\alpha}\Gamma(\alpha)}{\pi^{-(3-2\alpha)/2}\Gamma((3-2\alpha)/2)},$ $( -\Delta )^{\alpha}$   stands for the fractional Laplacian.   Under some more
assumptions on $f,$ we get infinitely many solutions for the system.
\end{abstract}

{\bf Key words }
Fractional Laplacian, \  Schr\"odinger-Maxwell
equations,  \ infinitely many solutions.
\\



{\bf 2000 MR. Subject Classification} \ \ 35B40, 35B45, 35J55, 35J60, 47J30.

\section{Introduction and the Main Result}

In this paper, we study the  fractional Schr\"{o}dinger-Maxwell equations
\begin{equation}\label{FSMeq 1.1}
   ( -\Delta )^{\alpha}u+V(x)u+\phi u=f(x,u),  \hbox{in \ \ $ \mathbb{R}^3$ ,}
\end{equation}
 \begin{equation}\label{FSMeq 1.2}
(-\triangle)^{\alpha}\phi =K_{\alpha}  u^2 \ \  \mathrm{in}\ \  \mathbb{R}^3
\end{equation}
 where $ u,\phi:\mathbb{R}^3\rightarrow \mathbb{R},f: \mathbb{R}^3\times\rightarrow \mathbb{R}, $ $\alpha \in (0,1],$  \   $K_{\alpha}=\dfrac{\pi^{-\alpha}\Gamma(\alpha)}{\pi^{-(3-2\alpha)/2}\Gamma((3-2\alpha)/2)},$ $ (-\triangle)^{\alpha}$  stands for the fractional Laplacian.
 Here the
fractional Laplacian \  $(-\triangle)^{\alpha}$ with $\alpha \in (0,1] $ of a function $\phi:\mathbb{R}^3\rightarrow \mathbb{R}$  is defined by: $$\mathcal {F}((-\triangle)^{\alpha}\phi)(\xi)=|\xi|^{2\alpha}\mathcal {F}(\phi)(\xi),\ \forall \alpha\in
(0,1],$$
where  $\mathcal
{F}$ is the Fourier
transform, i.e.,\  $$\mathcal
{F}(\phi)(\xi)=\frac{1}{(2\pi)^{3/2}}\int_{\mathbb{R}^3}\exp\{-2\pi
i\xi\cdot x\}\phi(x)\mathrm{d} x.$$ If $\phi$   is smooth enough,  $(-\Delta )^\alpha$  can also be computed
by the following singular integral
:\ $$(-\triangle)^{\alpha}\phi(x)=c_{3,\alpha}\mathrm{P.V.}\int_{\mathbb{R}^3}\frac{\phi(x)-\phi(y)}{|x-y|^{3+2\alpha}}\mathrm{d}
y.$$
Here $\mathrm{P.V.}$ is the principal value and $c_{3,\alpha}$ is a normalization constant.
Such a system \eqref{FSMeq 1.1} is called  Schr\"{o}dinger-Maxwell equations or Schr\"{o}dinger-Poisson equations which is obtained while looking
for existence of standing waves for the fractional nonlinear Schr\"{o}dinger equations interacting with an unknown electrostatic field. For
a more physical background of system \eqref{FSMeq 1.1}, we refer the reader to \cite{DM PRSE 2004 134,BF TMNA 1998 11}  and the references therein.

When $\alpha =1$,   system \eqref{FSMeq 1.1}  was first introduced by Benci and Fortunato in \cite{DM PRSE 2004 134}, and it
has been widely studied by many authors; The case $V \equiv 1$ or being radially
symmetric, has been studied under various conditions on $f$ in \cite{AR CCM 2008 10}-\cite{LZ FZ NA 2009 70};
When $V(x)$  is not a constant, the existence of infinitely many large solutions for
\eqref{FSMeq 1.1}  has been considered in \cite{AP jmaa 2008 345}-\cite{HuangWN TangXH RM 2014 65} via the fountain theorem (cf.  \cite{WillemM 1996,ZouWM MM 2001 104}.)

In  system \eqref{FSMeq 1.1}, we assume the following hypotheses on potential $V$ and nonlinear term $f :$ \\[2mm]
 ($\mathbb{V}$)\   $V\in C(\mathbb{R}^3,\mathbb{R})$, $\inf_{x\in \mathbb{R}^3}V(x)\geq a_1>0,$
where $a_1$  is a positive constant. Moreover,   $\lim\limits_{|x|\rightarrow \infty } \ V(x)=+ \infty.$\\[2mm]
$(\mathbb{H}_1)$ \ \  $f\in C( \mathbb{R}^3\times \mathbb{R},\mathbb{R})$, and there exist  $c_1,c_2>0,\, p\in(4,2_\alpha^*)$  such that
$$|f(x,u)|\leq c_1|u|+c_2| u|^{p-1}, \ \
 \forall \ x\in \mathbb{R}^3,\ u\in\mathbb{R}, $$
where, $2_\alpha^*=\frac{6}{3-2 \alpha}, \ \alpha> \frac{3}{4},$  $f(x,u)u\geq 0$ for $u\geq 0$.\\
$(\mathbb{H}_2)$ \ \ $\lim_{ |u|\to\infty}\frac{F(x,u)}{ u^4}=+\infty\ $  uniformly for $ x\in \mathbb{R}^3,$ here
$F(x,u)=\int_0^u f(x,t)\, \mathrm{d}t.$
\\
$(\mathbb{H}_3)$ \ \
Let  $G(x,u)=\frac 1 4f(x,u)u-F(x,u),$  there exist $a_0>0, $ and $g(x)\geq 0$ such that
$\int_{\mathbb{R}^3}g(x) \mathrm{d}x <+ \infty, $
$G(x,u)\geq -a_0g(x),\quad\forall\ (x,u)\in \mathbb{ R}^3\times\ \mathbb{R}.$
\\
$(\mathbb{H}_4)$ \ \
  $f(x,-u)=-f(x,u)\ \forall\ x\in \mathbb{R}^3,\ u\in \mathbb{R} .$

Now, we are ready to state the main result of this paper.
\begin{thm}\label{FSMeq thm1.1} Assume that $(\mathbb{V})$ and $(\mathbb{H}_1)-(\mathbb{H}_4)$  satisfy. Then system \eqref{FSMeq 1.1}
possesses infinitely many nontrivial solutions.
\end{thm}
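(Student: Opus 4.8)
My plan is to eliminate $\phi$ via the second equation and then to produce infinitely many critical points of the resulting energy functional by the Fountain Theorem (cf. \cite{WillemM 1996,ZouWM MM 2001 104}), with $(\mathbb{H}_4)$ supplying the $\mathbb{Z}_2$-symmetry. First I would fix the working space
$$E=\Big\{u\in H^{\alpha}(\mathbb{R}^3):\int_{\mathbb{R}^3}V(x)u^{2}\,\mathrm dx<+\infty\Big\},\qquad \|u\|^{2}=\int_{\mathbb{R}^3}\big(|(-\Delta)^{\alpha/2}u|^{2}+V(x)u^{2}\big)\,\mathrm dx .$$
By $(\mathbb{V})$, $E\hookrightarrow H^{\alpha}(\mathbb{R}^3)$ continuously, and since $V(x)\to+\infty$ the embedding $E\hookrightarrow L^{s}(\mathbb{R}^3)$ is \emph{compact} for $s\in[2,2_\alpha^{*})$ --- this is the classical coercive-potential argument, which goes through with the fractional seminorm. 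For $u\in E$ one has $u^{2}\in L^{6/(3+2\alpha)}(\mathbb{R}^3)$, so by the Hardy--Littlewood--Sobolev inequality $(-\Delta)^{\alpha}\phi=K_{\alpha}u^{2}$ has a unique solution $\phi_{u}$ in the homogeneous space $D^{\alpha,2}(\mathbb{R}^3)$, given by the Riesz potential of $u^{2}$ (the constant $K_{\alpha}$ being precisely the normalisation that makes this representation exact). I would then record the standard facts $\phi_{u}\ge0$, $\phi_{-u}=\phi_{u}$, $\phi_{tu}=t^{2}\phi_{u}$, continuity of $u\mapsto\phi_{u}$ with boundedness on bounded sets, and $0\le\int_{\mathbb{R}^3}\phi_{u}u^{2}\,\mathrm dx=\|\phi_{u}\|_{D^{\alpha,2}}^{2}\le C\|u\|^{4}$.

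With this, $(u,\phi)$ solves \eqref{FSMeq 1.1}--\eqref{FSMeq 1.2} iff $\phi=\phi_{u}$ and $u$ is a critical point of
$$I(u)=\frac12\|u\|^{2}+\frac14\int_{\mathbb{R}^3}\phi_{u}u^{2}\,\mathrm dx-\int_{\mathbb{R}^3}F(x,u)\,\mathrm dx ,$$
which is $C^{1}$ on $E$ by $(\mathbb{H}_1)$, with $I'(u)v=\langle u,v\rangle+\int\phi_{u}uv-\int f(x,u)v$, and even by $(\mathbb{H}_4)$ (together with $\phi_{-u}=\phi_{u}$). I would also note that $(\mathbb{H}_4)$ and $f(x,u)u\ge0$ for $u\ge0$ force $f(x,u)u\ge0$, hence $F(x,u)\ge0$, for all $(x,u)$. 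Next I would check the Palais--Smale condition. If $I(u_{n})\to c$ and $I'(u_{n})\to0$, then, exploiting that the nonlocal term is $4$-homogeneous,
$$I(u_{n})-\tfrac14 I'(u_{n})u_{n}=\tfrac14\|u_{n}\|^{2}+\int_{\mathbb{R}^3}G(x,u_{n})\,\mathrm dx\ge\tfrac14\|u_{n}\|^{2}-a_{0}\|g\|_{L^{1}}$$
by $(\mathbb{H}_3)$, so $\tfrac14\|u_{n}\|^{2}\le c+o(1)+\|I'(u_{n})\|\,\|u_{n}\|+a_{0}\|g\|_{L^{1}}$ and $\{u_{n}\}$ is bounded. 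Passing to a subsequence, $u_{n}\rightharpoonup u$ in $E$ and $u_{n}\to u$ in $L^{s}$ for $s\in[2,2_\alpha^{*})$; testing $I'(u_{n})-I'(u)$ against $u_{n}-u$ I would isolate $\|u_{n}-u\|^{2}$ and show the remaining terms vanish --- the $f$-terms by $(\mathbb{H}_1)$, H\"older and strong $L^{2},L^{p}$ convergence, the $\phi$-terms because $\{\phi_{u_{n}}\}$ is bounded in $D^{\alpha,2}\hookrightarrow L^{2_\alpha^{*}}$ and $u_{n}\to u$ strongly in the relevant Lebesgue spaces. Hence $u_{n}\to u$ in $E$.

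For the geometry I would split $E=\overline{\bigoplus_{j\ge1}X_{j}}$ with $\dim X_{j}<\infty$, set $Y_{k}=\bigoplus_{j\le k}X_{j}$, $Z_{k}=\overline{\bigoplus_{j\ge k}X_{j}}$, and use $\beta_{k}^{(s)}:=\sup_{u\in Z_{k},\,\|u\|=1}\|u\|_{L^{s}}\to0$ ($s\in[2,2_\alpha^{*})$), a consequence of the compact embedding. On $Y_{k}$ all norms are equivalent; given $M$, $(\mathbb{H}_2)$ yields $R_{M}$ with $F(x,u)\ge M|u|^{4}$ for $|u|\ge R_{M}$, whence (using $F\ge0$) $\int F(x,u)\ge M\|u\|_{L^{4}}^{4}-MR_{M}^{2}\|u\|_{L^{2}}^{2}$, so on $Y_{k}$
$$I(u)\le\big(\tfrac{C}{4}-Mc_{k}\big)\|u\|^{4}+\big(\tfrac12+MR_{M}^{2}C_{k}\big)\|u\|^{2},$$
and choosing $M>\tfrac{C}{4c_{k}}$ forces $I(u)\to-\infty$ as $\|u\|\to\infty$ in $Y_{k}$; pick $r_{k}$ with $\max_{u\in Y_{k},\,\|u\|=r_{k}}I(u)\le0$. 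On $Z_{k}$, using $\phi_{u}u^{2}\ge0$ and $F(x,u)\le\tfrac{c_{1}}{2}u^{2}+\tfrac{c_{2}}{p}|u|^{p}$,
$$I(u)\ge\tfrac12\|u\|^{2}-\tfrac{c_{1}}{2}(\beta_{k}^{(2)})^{2}\|u\|^{2}-\tfrac{c_{2}}{p}(\beta_{k}^{(p)})^{p}\|u\|^{p}\ge\tfrac14\|u\|^{2}-\tfrac{c_{2}}{p}(\beta_{k}^{(p)})^{p}\|u\|^{p}$$
for $k$ large, and the usual choice $\rho_{k}=\big(\tfrac{p}{8c_{2}}(\beta_{k}^{(p)})^{-p}\big)^{1/(p-2)}\to\infty$ (as $p>2$) gives $\inf_{u\in Z_{k},\,\|u\|=\rho_{k}}I(u)\ge\tfrac18\rho_{k}^{2}\to+\infty$, with $\rho_{k}<r_{k}$ for $k$ large. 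The Fountain Theorem then yields critical values $c_{k}\to+\infty$, i.e. infinitely many nontrivial solution pairs $(u_{k},\phi_{u_{k}})$.

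The step I expect to be the real obstacle is the Palais--Smale verification. Since $(\mathbb{H}_2)$--$(\mathbb{H}_3)$ replace the Ambrosetti--Rabinowitz condition, boundedness of $(PS)$ sequences is not free and must be extracted from $(\mathbb{H}_3)$ --- which works only because the weight $\tfrac14$ in $G$ is exactly the homogeneity of the Maxwell term --- while the passage to strong convergence hinges on controlling $\int(\phi_{u_{n}}u_{n}-\phi_{u}u)(u_{n}-u)\,\mathrm dx$, for which one needs the continuity and boundedness of $u\mapsto\phi_{u}$ together with the compactness of $E\hookrightarrow L^{s}$. A secondary difficulty is $(A_{1})$: on the non-compact domain $\mathbb{R}^3$ the $4$-superlinearity in $(\mathbb{H}_2)$ only just beats the order-$\|u\|^{4}$ nonlocal term, so the argument genuinely uses the finite dimension of $Y_{k}$ and the attendant equivalence of norms.
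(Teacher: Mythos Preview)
Your proposal is correct and shares all the essential ingredients with the paper: the same workspace $E$ with the compact embedding coming from $(\mathbb{V})$, the same reduction to an even $C^{1}$ functional via $\phi=\phi_{u}$, the same boundedness argument for Palais--Smale sequences through $I(u_{n})-\tfrac14 I'(u_{n})u_{n}$ and $(\mathbb{H}_3)$, and the same passage to strong convergence by testing against $u_{n}-u$ and using $(\mathbb{H}_1)$ together with the $L^{s}$-compactness. The one genuine difference lies in the abstract multiplicity theorem: the paper fixes a \emph{single} splitting $E=Y_{m}\oplus Z_{m}$, choosing $m$ once so that $\|u\|_{2}^{2}\le\tfrac{1}{2c_{1}}\|u\|^{2}$ and $\|u\|_{p}^{p}\le\tfrac{p}{4c_{2}}\|u\|^{p}$ on $Z_{m}$, and then invokes the $\mathbb{Z}_{2}$ symmetric mountain pass of Bartolo--Benci--Fortunato under the Cerami condition $(C)_{c}$; you instead let $k$ run and apply Willem's Fountain Theorem, extracting $b_{k}\to+\infty$ directly from $\beta_{k}^{(p)}\to0$ and $\rho_{k}\to\infty$. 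Both rest on the same decay $\beta_{k}(r)\to0$ (the paper's Lemma~3.3) and both deliver an unbounded sequence of critical values. Your route has the small advantage that ordinary $(PS)$ suffices --- and indeed, with $(\mathbb{H}_3)$ giving the lower bound on $G$ outright, the Cerami refinement is not really needed here --- while the paper's route avoids having to track the dependence of $\rho_{k},r_{k}$ on $k$.
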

\begin{rem} $(i):$ \
There are functions $f$ satisfying the assumptions $(\mathbb{H}_1)-(\mathbb{H}_4),$   for example $(1): \ f(x,u)=4u^3\ln(u^2+1)+\frac{2u^5}{(u^2+1)},$ then $a_0=0,$ $(\mathbb{H}_3)$  is satisfied;   $(2): \ f(x,u)=e^{-\sum_{i=1}^3|x_i|}u+|u|^{p-2}u,\ p\in(4,2_\alpha^*),\ \alpha >\frac{3}{4},$ then  $a_0=\frac{r_0^2}{4}, g(x)=e^{-\sum_{i=1}^3|x_i|}, r_0=\left(\frac{p}{p-4}\right)^{1/(p-2)}+1,$ $(\mathbb{H}_3)$  is satisfied.\\[2mm]
$(ii):$ \ the assumption $(\mathbb{H}_3)$  is weaker than the assumptions  $(f_4)$ in paper $\cite{LiQD Weizl NA 2010 72}$ and $(f3')$ in paper $\cite{HuangWN TangXH RM 2014 65}.$
\end{rem}
\section{Variational settings and preliminary results}

Now, let$'$s introduce some notations. For any $1 \leq  r < \infty, L^r(\mathbb{R}^3)$  is the
usual Lebesgue space with the norm
$$\|u\|_{L^r}=\Big(\int_{ \mathbb{R}^3}|u(x)|^r\,\mathrm{d}x\Big)^{\frac 1 r}.$$
The fractional order Sobolev space:
$$ H^\alpha (\mathbb R^3)=\left\{u\in L^2(\mathbb R^3):\ \int_{\mathbb{R}^3}(|\xi|^{2\alpha}\hat{u}^2+\hat{u}^2)\ \mathrm{d} \xi<\infty\right\},$$
where \ $\hat{u}=\mathcal {F}(u)$,  The norm is defined by
$$ \|u\|_{H^\alpha(\mathbb
R^3)}=\left(\int_{\mathbb{R}^3}(|\xi|^{2\alpha}\hat{u}^2+\hat{u}^2)\ \mathrm{d}
\xi\right)^{\frac{1}{2}}.$$
The spaces
$D^\alpha (\mathbb R^3)$  is defined as the completion of $C_0^{\infty}(\mathbb R^3)$ under the norms
$$ \|u\|_{D^\alpha(\mathbb
R^3)}=\left(\int_{\mathbb{R}^3}(|\xi|^{2\alpha}\hat{u}^2 \mathrm{d}
\xi\right)^{\frac{1}{2}}=\left(\int_{\mathbb{R}^3}|(-\Delta)^{\alpha/2}u(x)|^2 \mathrm{d}
x\right)^{\frac{1}{2}}.$$
Note that, by Plancherel's theorem we have  $\|u\|_{2}=\|\hat{u}\|_2,$  and
 $$\aligned \int_{\mathbb{R}^3}|(-\Delta)^{\frac{\alpha}{2}}u(x)|^2 \mathrm{d}  x&=\int_{\mathbb{R}^3}
(\widehat{(-\Delta)^{\frac{\alpha}{2}}u(\xi)})^2  \mathrm{d} \xi
 =\int_{\mathbb{R}^3}(|\xi|^\alpha \hat{u}(\xi))^2 \mathrm{d}
\xi \\ & =\int_{\mathbb{R}^3}|\xi|^{2\alpha}\hat{u}^2 \mathrm{d}  \xi<\infty, \ \forall
u\in H^\alpha(\mathbb R^3).\endaligned $$
It follows that
$$ \|u\|_{H^\alpha(\mathbb
R^3)}=\left(\int_{\mathbb{R}^3}(|(-\Delta)^{\frac{\alpha}{2}}u(x)|^2 +u^2)\mathrm{d}  x \right)^{\frac{1}{2}}.$$
In our problem, we work in the space defined by
\begin{equation}\label{FSMeq-space}
E:=\bigg\{u\in H^\alpha(\mathbb R^3)\Bigm|\left(\int_{\mathbb{R}^3}(|(-\Delta)^{\frac{\alpha}{2}}u(x)|^2 +V(x)u^2)\mathrm{d}  x \right)^{\frac{1}{2}}<\infty\bigg\}.
\end{equation}
Thus, $E$ is a Hilbert space with the inner product
$$(u,v)_E:=\int_{ \mathbb{R}^3}\big((-\Delta)^{\frac{\alpha}{2}}u(x)\cdot(-\Delta)^{\frac{\alpha}{2}}v(x) +V(x)uv\big)\,\mathrm{d}x.$$
and its norm is $\|u\|=(u,u)^{\frac 12}.$  Obviously, under the assumptions $(\mathbb{V}),$
$\|u\|_E \equiv \|u\|_{H^{\alpha}}. $

 \begin{lem}[see  $\cite{ChangXJ JMP2013}$\ Lemma  2.2  and   $\cite{PFelmer AQuaas and JGTan PRSE 2012 142}$]\label{FLKGSlem2.1}
     $H^\alpha(\mathbb{R}^3)$ is continuously embedded into $L^p(\mathbb{R}^3)$  for $p\in [2,2^*_{\alpha}];$  and compactly embedded into
$L^p
_{loc}(\mathbb{R}^N)$ for  $p\in [2,2^*_{\alpha})$ where $2^*_{\alpha}=\dfrac{6}{3-2\alpha}.$
 Therefore, there exists a positive constant $C_p$ such that
$$\|u\|_p\leq C_p \|u\|_{H^\alpha(\mathbb{R}^3)}. $$
 \end{lem}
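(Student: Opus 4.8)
The plan is to establish the continuous embedding first over the whole scale $[2,2^*_{\alpha}]$ and then deduce the local compactness by an interpolation bootstrap. The two endpoints are handled separately. The case $p=2$ is immediate, since by the very definition of the norm $\|u\|_2\le\|u\|_{H^\alpha(\mathbb{R}^3)}$. For the critical exponent $p=2^*_{\alpha}=\frac{6}{3-2\alpha}$ I would invoke the fractional Sobolev inequality. Setting $f:=(-\Delta)^{\alpha/2}u\in L^2(\mathbb{R}^3)$, the Fourier multiplier identity $\hat u(\xi)=|\xi|^{-\alpha}\hat f(\xi)$ shows that $u=I_\alpha f$, the Riesz potential of order $\alpha$. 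The Hardy--Littlewood--Sobolev inequality then gives $\|I_\alpha f\|_{L^q}\le C\|f\|_{L^2}$ with $\frac1q=\frac12-\frac{\alpha}{3}$, i.e. $q=2^*_{\alpha}$; hence $\|u\|_{2^*_{\alpha}}\le C\|(-\Delta)^{\alpha/2}u\|_2\le C\|u\|_{H^\alpha}$. The constraint $3-2\alpha>0$ needed for HLS holds since $\alpha\in(0,1]$. The intermediate exponents $p\in(2,2^*_{\alpha})$ then follow by interpolation: choosing $\theta\in(0,1)$ with $\frac1p=\frac{1-\theta}{2}+\frac{\theta}{2^*_{\alpha}}$, H\"older's inequality yields $\|u\|_p\le\|u\|_2^{1-\theta}\|u\|_{2^*_{\alpha}}^{\theta}\le C_p\|u\|_{H^\alpha}$, which is exactly the displayed bound.

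For the local compactness I would argue via the Fr\'echet--Kolmogorov criterion. Let $\{u_k\}$ be bounded in $H^\alpha(\mathbb{R}^3)$, $\|u_k\|_{H^\alpha}\le M$, and fix a bounded open set $\Omega$. On the Fourier side, for any shift $h$,
$$\|u_k(\cdot+h)-u_k\|_2^2=\int_{\mathbb{R}^3}\bigl|e^{2\pi i h\cdot\xi}-1\bigr|^2\,|\hat u_k(\xi)|^2\,\mathrm{d}\xi,$$
and splitting the integral at $|\xi|=R$ --- on $\{|\xi|>R\}$ using $4\le 4(|\xi|/R)^{2\alpha}$ and on $\{|\xi|\le R\}$ using $(2\pi|h|\,|\xi|)^2\le(2\pi|h|R)^2$ --- one obtains $\|u_k(\cdot+h)-u_k\|_2^2\le \frac{4}{R^{2\alpha}}\|u_k\|_{H^\alpha}^2+(2\pi|h|R)^2\|u_k\|_2^2$, and optimizing in $R$ yields a bound $C|h|^{2\alpha}M^2$ uniform in $k$. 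This uniform $L^2$-equicontinuity of translations, together with the uniform $L^2$ bound, verifies the Fr\'echet--Kolmogorov hypotheses on $\Omega$, so a subsequence converges strongly in $L^2(\Omega)$. To pass from $L^2$ to $L^p$ with $p\in(2,2^*_{\alpha})$ I would interpolate once more: since $\{u_k\}$ is bounded in $L^{2^*_{\alpha}}(\Omega)$ by the continuous embedding just proved, writing $\frac1p=\frac{1-\theta}{2}+\frac{\theta}{2^*_{\alpha}}$ gives $\|u_k-u_j\|_{L^p(\Omega)}\le\|u_k-u_j\|_{L^2(\Omega)}^{1-\theta}\|u_k-u_j\|_{L^{2^*_{\alpha}}(\Omega)}^{\theta}$, whose right-hand side tends to $0$ along the subsequence. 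Hence the embedding into $L^p_{loc}$ is compact for every $p\in[2,2^*_{\alpha})$.

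The step I expect to be the crux is the critical Sobolev inequality: the entire scale of continuous embeddings, and through the interpolation argument also the strong $L^p_{loc}$ convergence, all hinge on controlling the $L^{2^*_{\alpha}}$ norm by $\|(-\Delta)^{\alpha/2}u\|_2$. Establishing this cleanly requires the Hardy--Littlewood--Sobolev inequality (or, equivalently, the Gagliardo seminorm estimate $\iint |u(x)-u(y)|^2|x-y|^{-(3+2\alpha)}\,\mathrm{d}x\,\mathrm{d}y\simeq\|(-\Delta)^{\alpha/2}u\|_2^2$). The compactness part, by contrast, is a soft consequence of the uniform translation estimate once that critical bound is in hand, and the endpoint $p=2$ together with interpolation ties the two together.
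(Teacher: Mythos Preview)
The paper does not give a proof of this lemma at all: it is quoted from the references \cite{ChangXJ JMP2013} and \cite{PFelmer AQuaas and JGTan PRSE 2012 142}, so there is no ``paper's proof'' to compare against. Your argument is therefore a genuine, self-contained substitute, and it is essentially correct. The strategy --- endpoints via Plancherel and Hardy--Littlewood--Sobolev, intermediate exponents by H\"older interpolation, and local compactness by a Fourier-side equicontinuity estimate fed into Fr\'echet--Kolmogorov --- is the standard one, and each step is sound.

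One small correction: when you optimize in $R$ in the translation estimate, the bound you obtain is of order $|h|^{2\alpha/(1+\alpha)}M^2$, not $|h|^{2\alpha}M^2$. (Balancing $4/R^{2\alpha}$ against $(2\pi|h|R)^2$ forces $R\sim |h|^{-1/(1+\alpha)}$.) This is harmless, since all you need is that the bound tends to $0$ as $|h|\to 0$ uniformly in $k$; you could equally well just pick $R=|h|^{-1/2}$ and not optimize at all. A second cosmetic point: to apply Fr\'echet--Kolmogorov on a bounded domain $\Omega$ you should, strictly speaking, multiply by a smooth cutoff $\chi$ with $\chi\equiv 1$ on $\Omega$ so that the functions have common compact support (tightness); the equicontinuity you proved transfers to $\chi u_k$ and the conclusion follows. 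With these two tweaks the argument is complete.
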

\begin{lem}[see $\cite{Zifei Shen and Fashun Gao AAA 2014}$]\label{FLKGSlem2.2}
Under the assumption $(\mathbb{V}),$ the embedding  $E$   is compactly embedded into  $L^p(\mathbb{R}^3)$  for $p\in [2,2^*_{\alpha}).$
  \end{lem}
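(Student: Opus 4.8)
The plan is to prove sequential compactness directly: given a sequence $\{u_n\}$ bounded in $E$, say $\|u_n\|\le M$, I would produce a subsequence that converges strongly in $L^p(\mathbb{R}^3)$ for each $p\in[2,2^*_\alpha)$. Since $E$ is a Hilbert space it is reflexive, so after passing to a subsequence I may assume $u_n\rightharpoonup u$ weakly in $E$. Because $(\mathbb{V})$ gives $V(x)\ge a_1>0$, one has $\|v\|_{H^\alpha}\le C\|v\|$ for every $v\in E$, so the embedding $E\hookrightarrow H^\alpha(\mathbb{R}^3)$ is continuous; consequently $u_n\rightharpoonup u$ weakly in $H^\alpha(\mathbb{R}^3)$ as well, and $\{u_n\}$ stays bounded there. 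It therefore suffices to prove $u_n\to u$ strongly in $L^p(\mathbb{R}^3)$.

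The next step is to separate the local part from the tail. On every ball $B_R=\{|x|<R\}$, Lemma \ref{FLKGSlem2.1} supplies the compact embedding $H^\alpha(\mathbb{R}^3)\hookrightarrow\hookrightarrow L^p(B_R)$ for $p\in[2,2^*_\alpha)$, so along the subsequence $u_n\to u$ strongly in $L^p(B_R)$ for each fixed $R$. The whole difficulty is thus pushed to the exterior region $\{|x|\ge R\}$, and this is precisely where I would invoke the coercivity of $V$.

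For the tail, set $b_R:=\inf_{|x|\ge R}V(x)$; by the second part of $(\mathbb{V})$, $b_R\to+\infty$ as $R\to\infty$. Then for any $v\in E$,
$$\int_{|x|\ge R}|v|^2\,\mathrm{d}x\le\frac{1}{b_R}\int_{|x|\ge R}V(x)|v|^2\,\mathrm{d}x\le\frac{1}{b_R}\|v\|^2,$$
so $\int_{|x|\ge R}|u_n|^2\,\mathrm{d}x\le M^2/b_R$ uniformly in $n$, which already settles the case $p=2$. For $p\in(2,2^*_\alpha)$ I would interpolate: writing $\frac1p=\frac{1-\theta}{2}+\frac{\theta}{2^*_\alpha}$ with $\theta\in(0,1)$, H\"older's inequality gives
$$\Big(\int_{|x|\ge R}|u_n|^p\,\mathrm{d}x\Big)^{1/p}\le\Big(\int_{|x|\ge R}|u_n|^2\,\mathrm{d}x\Big)^{(1-\theta)/2}\Big(\int_{|x|\ge R}|u_n|^{2^*_\alpha}\,\mathrm{d}x\Big)^{\theta/2^*_\alpha},$$
and since $\|u_n\|_{2^*_\alpha}\le C_{2^*_\alpha}\|u_n\|_{H^\alpha}$ is bounded by Lemma \ref{FLKGSlem2.1}, the right-hand side is at most $C(M^2/b_R)^{(1-\theta)/2}$, which tends to $0$ as $R\to\infty$, uniformly in $n$. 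The same bound holds for the weak limit $u$.

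Finally I would assemble the pieces by an $\varepsilon$-argument: given $\varepsilon>0$, choose $R$ so large that both $\int_{|x|\ge R}|u_n|^p\,\mathrm{d}x$ and $\int_{|x|\ge R}|u|^p\,\mathrm{d}x$ are below $\varepsilon$ for all $n$, then use the local strong convergence to make $\int_{B_R}|u_n-u|^p\,\mathrm{d}x<\varepsilon$ for $n$ large; combining yields $\limsup_n\|u_n-u\|_p^p\le C\varepsilon$, and letting $\varepsilon\to0$ gives $u_n\to u$ in $L^p(\mathbb{R}^3)$. The main obstacle is the uniform-in-$n$ control of the exterior tail; everything rests on the coercivity hypothesis $V(x)\to+\infty$, which converts the bounded $E$-norm into uniformly small $L^2$ mass at infinity, while the endpoint Sobolev bound of Lemma \ref{FLKGSlem2.1} is what lets interpolation upgrade this from $p=2$ to every $p<2^*_\alpha$.
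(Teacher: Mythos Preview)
Your argument is correct and is the standard route to this compact embedding: pass to a weakly convergent subsequence, use the local compact embedding of $H^\alpha$ (Lemma~\ref{FLKGSlem2.1}) on balls, control the $L^2$ tail via the coercivity $V(x)\to+\infty$, and interpolate with the endpoint Sobolev bound to upgrade to all $p<2^*_\alpha$. One small point worth making explicit is that, once the weak limit $u$ is fixed, the full subsequence converges strongly in $L^p(B_R)$ for every $R$ (since any further subsequence has a sub-subsequence converging to the same $u$); this avoids a diagonal extraction.

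As for comparison with the paper: the paper does not actually prove this lemma. It simply quotes it from \cite{Zifei Shen and Fashun Gao AAA 2014} and moves on. So your write-up supplies a self-contained proof where the paper relies on an external reference; the approach you take is essentially the one found in that reference (and in the classical case $\alpha=1$, in Bartsch--Wang type arguments), so there is no methodological divergence to discuss.
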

 \begin{lem}[see $\cite{HajaiejH YuX ZhaiZ jmaa 2012}$]\label{FLKGSlem2.3}
 For $1<p<\infty$ and $0<\alpha < N/p,$  we have
 \begin{equation}\label{FLKGS 2.1}
    \|u\|_{L^{\frac{pN}{N-p\alpha}}(\mathbb{R}^N)}\leq B \|(-\Delta)^{\alpha/2}u\|_{L^p(\mathbb{R}^N)}
 \end{equation}
 with best constant
 $$
 B=2^{-\alpha}\pi^{-\alpha/2}\frac{\Gamma((N-\alpha)/2)}{\Gamma((N+\alpha)/2)}
 \left(\frac{\Gamma((N)}{\Gamma(N/2)}\right)^{\alpha/N}.
 $$
  \end{lem}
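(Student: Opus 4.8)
The plan is to obtain \eqref{FLKGS 2.1} by inverting the fractional Laplacian through the Riesz potential and then invoking the sharp Hardy--Littlewood--Sobolev (HLS) inequality. Recall that the Riesz potential $I_{\alpha}$ of order $\alpha$ acts on the Fourier side as the multiplier $|\xi|^{-\alpha}$, so that $(-\Delta)^{\alpha/2}I_{\alpha}=\mathrm{Id}$ on the natural homogeneous space. First I would argue on the dense class $u\in C_0^{\infty}(\mathbb{R}^N)$ and set $f:=(-\Delta)^{\alpha/2}u$, so that
$$ u=I_{\alpha}f(x)=\frac{1}{\gamma(N,\alpha)}\int_{\mathbb{R}^N}\frac{f(y)}{|x-y|^{N-\alpha}}\,\mathrm{d}y,\qquad \gamma(N,\alpha)=2^{\alpha}\pi^{N/2}\frac{\Gamma(\alpha/2)}{\Gamma((N-\alpha)/2)}. $$
With $q:=\frac{pN}{N-p\alpha}$, the desired bound $\|u\|_{q}\le B\|f\|_{p}$ is then precisely the assertion that $I_{\alpha}$ maps $L^{p}(\mathbb{R}^N)$ boundedly into $L^{q}(\mathbb{R}^N)$, i.e. the HLS inequality, and $B$ is exactly the operator norm of $I_{\alpha}$ between these two spaces. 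The passage from $C_0^{\infty}$ to the full space is then a routine density/approximation argument.

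To pin down the sharp value of $B$, I would pass to the bilinear (dual) formulation. Using $\|I_{\alpha}f\|_{q}=\sup_{\|g\|_{q'}=1}\int_{\mathbb{R}^N}(I_{\alpha}f)\,g\,\mathrm{d}x$, the task becomes maximizing
$$ \frac{1}{\gamma(N,\alpha)}\int_{\mathbb{R}^N}\int_{\mathbb{R}^N}\frac{f(x)\,g(y)}{|x-y|^{N-\alpha}}\,\mathrm{d}x\,\mathrm{d}y $$
subject to $\|f\|_{p}=\|g\|_{q'}=1$. In the conformally invariant situation (which in particular covers the case $p=2$ used for the embedding $E\hookrightarrow L^{2^{*}_{\alpha}}$ in this paper) Lieb's theorem identifies both the sharp HLS constant and its extremizers, which are the translates and dilates of the Aubin--Talenti type profile $h(x)=(1+|x|^{2})^{-(N-\alpha)/2}$. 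Pulling $h$ back through $I_{\alpha}$ yields the corresponding extremal for \eqref{FLKGS 2.1}, against which the constant $B$ is to be evaluated.

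The main obstacle is the explicit evaluation of $B$ on this extremal. Here I would use two classical computations: the action of the Riesz potential on a power of $(1+|x|^{2})$, which returns another such power multiplied by an explicit ratio of Gamma functions and supplies the factor $2^{-\alpha}\Gamma((N-\alpha)/2)/\Gamma((N+\alpha)/2)$; and the normalizing integral $\int_{\mathbb{R}^N}(1+|x|^{2})^{-N}\,\mathrm{d}x=\pi^{N/2}\Gamma(N/2)/\Gamma(N)$, whose appearance in the $L^{p}$ normalization of the extremal produces the factor $(\Gamma(N)/\Gamma(N/2))^{\alpha/N}$ together with the remaining power of $\pi$. Assembling these Beta/Gamma evaluations with the constant $\gamma(N,\alpha)$ reproduces
$$ B=2^{-\alpha}\pi^{-\alpha/2}\frac{\Gamma((N-\alpha)/2)}{\Gamma((N+\alpha)/2)}\left(\frac{\Gamma(N)}{\Gamma(N/2)}\right)^{\alpha/N}. $$
The delicate points I expect to spend most effort on are verifying that the HLS extremizer indeed pulls back to a genuine extremizer of \eqref{FLKGS 2.1} (so that the constant is truly best, not merely admissible) and controlling the Gamma-function bookkeeping so that every factor lands as stated.
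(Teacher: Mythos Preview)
The paper does not prove this lemma at all: it is simply quoted from the reference \cite{HajaiejH YuX ZhaiZ jmaa 2012}, with no argument supplied. So there is no ``paper's own proof'' to compare against; your outline is an independent proof of a result the paper imports wholesale.

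That said, your route via the Riesz potential representation $u=I_{\alpha}\bigl((-\Delta)^{\alpha/2}u\bigr)$ and the sharp Hardy--Littlewood--Sobolev inequality is exactly the standard derivation, and is essentially what the cited source does. One point deserves care: Lieb's identification of the sharp HLS constant and extremizers is established in the conformally invariant (diagonal) regime, which on the Sobolev side corresponds to $p=2$. Your outline therefore delivers the inequality for all $1<p<N/\alpha$ but the \emph{sharpness} of the displayed constant $B$ only for $p=2$. This is harmless for the paper's purposes, since every subsequent application of \eqref{FLKGS 2.1} here (e.g.\ \eqref{FSMeq 2.4}, \eqref{FSMeq 2.11}) is in the Hilbert case $p=2$; but if you want the lemma exactly as stated you should either restrict to $p=2$ or cite the appropriate extension for general $p$.
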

 \begin{lem}\label{FLKGSlem2.4}
  For any  $u\in H^\alpha(\mathbb{R}^N)$ and for any $h\in D^{-\alpha}(\mathbb{R}^N),$  there exists a unique solution $\phi=\left((-\Delta)^\alpha+u^2\right)^{-1}h\in D^\alpha(\mathbb{R}^N)$  of the equation
  $$
  (-\Delta)^\alpha \phi +u^2 \phi =h,
  $$
(being $ D^{-\alpha}(\mathbb{R}^N)$  the dual space of $ D^{\alpha}(\mathbb{R}^N)$).
Moreover, for every $u\in H^\alpha(\mathbb{R}^N)$ and for every
$h,g \in D^{-\alpha}(\mathbb{R}^N),$
\begin{equation}\label{FLKGS 2.2}
   \langle h, \left((-\Delta)^\alpha+u^2\right)^{-1}g \rangle
   =\langle g, \left((-\Delta)^\alpha+u^2\right)^{-1}h \rangle
 \end{equation}
where $\langle\cdot, \cdot\rangle$  denotes the duality pairing between $ D^{-\alpha}(\mathbb{R}^N)$ and $ D^{\alpha}(\mathbb{R}^N).$
  \end{lem}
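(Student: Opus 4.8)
The natural route is to produce $\phi$ as a Lax--Milgram (Riesz) representative and then read every assertion off the weak formulation. On the Hilbert space $D^\alpha(\mathbb{R}^N)$, equipped with the inner product $\int_{\mathbb{R}^N}(-\Delta)^{\alpha/2}\phi\,(-\Delta)^{\alpha/2}\psi\,\mathrm{d}x$, introduce the bilinear form
$$
a(\phi,\psi):=\int_{\mathbb{R}^N}(-\Delta)^{\alpha/2}\phi\,(-\Delta)^{\alpha/2}\psi\,\mathrm{d}x+\int_{\mathbb{R}^N}u^2\phi\psi\,\mathrm{d}x ,
$$
and interpret the equation $(-\Delta)^\alpha\phi+u^2\phi=h$ in $D^{-\alpha}(\mathbb{R}^N)$ as $a(\phi,\psi)=\langle h,\psi\rangle$ for all $\psi\in D^\alpha(\mathbb{R}^N)$. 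The first task is to check that $a$ is well defined, symmetric, bounded and coercive on $D^\alpha(\mathbb{R}^N)$. Symmetry is clear, and coercivity is immediate, since $a(\phi,\phi)\ge\|\phi\|_{D^\alpha}^2$.

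The only genuine computation, and the point I expect to require the most care, is boundedness, i.e. controlling the potential term. By Hölder's inequality, pairing $u^2$ with $L^{N/2\alpha}$ and $\phi,\psi$ with $L^{2^*_\alpha}$ (these exponents are conjugate, as $2\alpha/N+2/2^*_\alpha=1$),
$$
\Big|\int_{\mathbb{R}^N}u^2\phi\psi\,\mathrm{d}x\Big|\le\|u\|_{L^{N/\alpha}}^2\,\|\phi\|_{L^{2^*_\alpha}}\,\|\psi\|_{L^{2^*_\alpha}} .
$$
Lemma \ref{FLKGSlem2.3} with $p=2$ bounds $\|\phi\|_{L^{2^*_\alpha}}$ and $\|\psi\|_{L^{2^*_\alpha}}$ by $\|\phi\|_{D^\alpha}$ and $\|\psi\|_{D^\alpha}$, while Lemma \ref{FLKGSlem2.1} bounds $\|u\|_{L^{N/\alpha}}$ by $\|u\|_{H^\alpha}$; the latter uses $N/\alpha\in[2,2^*_\alpha]$, which holds when $\alpha\ge N/4$ (in particular for $N=3$ and $\alpha>3/4$, as assumed throughout). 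Hence $|a(\phi,\psi)|\le(1+C\|u\|_{H^\alpha}^2)\,\|\phi\|_{D^\alpha}\|\psi\|_{D^\alpha}$, and the same estimate shows that for fixed $\phi\in D^\alpha(\mathbb{R}^N)$ the map $\psi\mapsto\int_{\mathbb{R}^N}u^2\phi\psi$ is a bounded linear functional on $D^\alpha(\mathbb{R}^N)$, so $u^2\phi\in D^{-\alpha}(\mathbb{R}^N)$ and the equation is meaningful.

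With these properties in hand, the Lax--Milgram theorem (or, since $a$ is symmetric, the Riesz representation theorem applied to the equivalent inner product $a$) yields, for each $h\in D^{-\alpha}(\mathbb{R}^N)$, a unique $\phi\in D^\alpha(\mathbb{R}^N)$ with $a(\phi,\psi)=\langle h,\psi\rangle$ for all $\psi\in D^\alpha(\mathbb{R}^N)$; one then \emph{defines} $\big((-\Delta)^\alpha+u^2\big)^{-1}h:=\phi$, which is by construction the unique weak solution of $(-\Delta)^\alpha\phi+u^2\phi=h$ in $D^{-\alpha}(\mathbb{R}^N)$. For the symmetry relation \eqref{FLKGS 2.2}, write $\phi_h:=\big((-\Delta)^\alpha+u^2\big)^{-1}h$ and $\phi_g:=\big((-\Delta)^\alpha+u^2\big)^{-1}g$; testing the weak equation for $\phi_h$ with $\psi=\phi_g$ and that for $\phi_g$ with $\psi=\phi_h$ gives $\langle h,\phi_g\rangle=a(\phi_h,\phi_g)=a(\phi_g,\phi_h)=\langle g,\phi_h\rangle$, which is exactly \eqref{FLKGS 2.2}. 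Coercivity, the Lax--Milgram step and the symmetry argument are routine; the only delicate ingredient is the boundedness estimate above, where the fractional Sobolev inequalities and the restriction on $\alpha$ enter.
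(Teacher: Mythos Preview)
Your proof is correct and follows essentially the same route as the paper: both set up the bilinear form $a(\phi,\psi)=\int(-\Delta)^{\alpha/2}\phi\,(-\Delta)^{\alpha/2}\psi+\int u^2\phi\psi$, control the potential term by the same H\"older/Sobolev estimate (the paper writes it as $\int u^2\phi^2\le\|u\|_{2p}^2\|\phi\|_{2^*_\alpha}^2\le B^2\|u\|_{2p}^2\|\phi\|_{D^\alpha}^2$ with $2p=N/\alpha$), invoke Lax--Milgram, and deduce \eqref{FLKGS 2.2} from the symmetry of $a$. If anything, you are slightly more explicit than the paper in flagging that $\|u\|_{N/\alpha}$ is controlled by $\|u\|_{H^\alpha}$ only when $N/\alpha\in[2,2^*_\alpha]$, i.e.\ $\alpha\ge N/4$, which the paper uses silently.
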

\begin{proof}[\bf Proof.]  If $u\in H^\alpha(\mathbb{R}^N),$  then by H\"{o}lder inequality and $\eqref{FLKGS 2.1}$
\begin{equation}\label{FSMeq 2.4}
 \int_{\mathbb{R}^N} u^2\phi^2\mathrm{d}x \leq \|u\|^2_{2p}\|\phi\|^2_{2q}\leq B^2  \|u\|^2_{2p}\|\phi\|^2_{D^\alpha},
\end{equation}
where $\frac{1}{p}+\frac{1}{q}=1, \ q=\frac{N}{N-2\alpha},\ 2q=2^*_{\alpha}.$
 Thus
$\left(\int |(-\Delta)^{\alpha/2}\phi|^2+\int u^2\phi^2\right)^{1/2}$
is a norm in  $ D^{\alpha}(\mathbb{R}^N)$  equivalent to  $ \|\phi \|_{D^{\alpha}}.$
Hence, by the application of Lax-Milgram Lemma, we obtain  the existence part.
For every $u\in H^\alpha(\mathbb{R}^N)$ and for every
$h,g \in D^{-\alpha}(\mathbb{R}^N),$ we have
$\phi_g= \left((-\Delta)^\alpha+u^2\right)^{-1}g, \ \phi_h=\left((-\Delta)^\alpha+u^2\right)^{-1}h.$
Hence,
$$ \ba{l}\langle h, \left((-\Delta)^\alpha+u^2\right)^{-1}g \rangle
=\dint h\left((-\Delta)^\alpha+u^2\right)^{-1}g \mathrm{d}x \\[3mm]
=\dint h\phi _g \mathrm{d}x =\int \left((-\Delta)^\alpha+u^2\right)\phi _h\phi _g \mathrm{d}x \\[3mm]
 =\dint \left((-\Delta)^\alpha \phi _h+u^2\phi _h\right)\phi _g \mathrm{d}x
 =\dint \left((-\Delta)^\alpha \phi _g+u^2\phi _g\right)\phi _h \mathrm{d}x
 \\[3mm]= \dint g \phi _h \mathrm{d}x = \int g \left((-\Delta)^\alpha+u^2\right)^{-1}h \mathrm{d}x
   =\langle g, \left((-\Delta)^\alpha+u^2\right)^{-1}h \rangle. \ea
$$
So, we get \eqref{FLKGS 2.2}.\end{proof}

\begin{lem}[see $\cite{Elliott H Lieb  Michael Loss Analysis 2001}$]\label{FLKGS lem4.1}
Let $f$ be a function in $C_0^{\infty}(\mathbb{R}^N)$  and let $0 < \alpha < n .$  Then, with
 \begin{equation}\label{FLKGS 4.1}
    c_{\alpha}\doteq \pi ^{-\alpha/2}\Gamma(-\alpha/2),
 \end{equation}
 \begin{equation}\label{FLKGS 4.2}
    c_{\alpha}(\xi ^{-\alpha}\widehat{f}(\xi))^\vee{}(x)=  c_{n-\alpha} \int_{\mathbb{R}^n}|x-y|^{\alpha -n}f(y)\mathrm{d}y.
 \end{equation}
   \end{lem}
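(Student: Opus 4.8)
The plan is to reduce the stated identity to a single scalar computation---the inverse Fourier transform of the radial weight $|\xi|^{-\alpha}$ (the symbol $\xi^{-\alpha}$ in \eqref{FLKGS 4.2} being read as $|\xi|^{-\alpha}$)---and then to propagate it to $f$ via the convolution theorem. Throughout I adopt the normalization of the cited reference, $\widehat{f}(\xi)=\int_{\mathbb{R}^n}f(x)e^{-2\pi i x\cdot\xi}\,\mathrm{d}x$, for which the Gaussian is self-dual and the convolution theorem carries no extra constant; this is the convention under which the constants $c_\alpha$ are calibrated. Since $f\in C_0^\infty(\mathbb{R}^n)$ we have $\widehat{f}\in\mathcal{S}(\mathbb{R}^n)$, and because $0<\alpha<n$ the product $|\xi|^{-\alpha}\widehat{f}(\xi)$ lies in $L^1$: the singularity $|\xi|^{-\alpha}$ at the origin is locally integrable precisely because $\alpha<n$, while the rapid decay of $\widehat{f}$ controls infinity. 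Hence the left-hand side is a genuine bounded continuous function given by the inversion integral $\int_{\mathbb{R}^n}|\xi|^{-\alpha}\widehat{f}(\xi)e^{2\pi i x\cdot\xi}\,\mathrm{d}\xi$, so no distributional subtleties intervene.

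The engine of the proof is the subordination (heat-kernel) identity obtained from $\Gamma(s)a^{-s}=\int_0^\infty t^{s-1}e^{-at}\,\mathrm{d}t$ with $a=\pi|\xi|^2$ and $s=\alpha/2$, namely
\[
|\xi|^{-\alpha}=\frac{\pi^{\alpha/2}}{\Gamma(\alpha/2)}\int_0^\infty t^{\alpha/2-1}e^{-\pi t|\xi|^2}\,\mathrm{d}t .
\]
First I would insert this into the inversion integral and apply Fubini (licensed because $\int_0^\infty\!\!\int_{\mathbb{R}^n}t^{\alpha/2-1}e^{-\pi t|\xi|^2}|\widehat{f}(\xi)|\,\mathrm{d}\xi\,\mathrm{d}t=\Gamma(\alpha/2)\pi^{-\alpha/2}\int|\xi|^{-\alpha}|\widehat{f}|<\infty$). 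For each fixed $t$ the inner $\xi$-integral is $(e^{-\pi t|\cdot|^2}\widehat{f})^\vee$, which by the convolution theorem together with the scaling rule $(e^{-\pi t|\cdot|^2})^\vee(x)=t^{-n/2}e^{-\pi|x|^2/t}$ equals $\int_{\mathbb{R}^n}t^{-n/2}e^{-\pi|x-y|^2/t}f(y)\,\mathrm{d}y$. A second Fubini (justified by the compact support of $f$ and the integrability in $t$ of the heat kernel) then leaves the single $t$-integral $\int_0^\infty t^{(\alpha-n)/2-1}e^{-\pi|x-y|^2/t}\,\mathrm{d}t$, which the substitution $u=1/t$ converts into a Gamma integral evaluating to $\Gamma((n-\alpha)/2)\,\pi^{(\alpha-n)/2}|x-y|^{\alpha-n}$.

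Collecting constants gives the prefactor $\frac{\Gamma((n-\alpha)/2)}{\Gamma(\alpha/2)}\pi^{(2\alpha-n)/2}$ in front of $\int_{\mathbb{R}^n}|x-y|^{\alpha-n}f(y)\,\mathrm{d}y$, and a direct check shows this equals $c_{n-\alpha}/c_\alpha$ when $c_\beta=\pi^{-\beta/2}\Gamma(\beta/2)$; multiplying through by $c_\alpha$ then yields \eqref{FLKGS 4.2}. I expect the main obstacle to be bookkeeping rather than analysis: the two Fubini interchanges must be justified uniformly (both hinge on $0<\alpha<n$), and the constant must be tracked through both changes of variable so that it lands exactly on $c_{n-\alpha}/c_\alpha$. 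In carrying this out one also sees that the definition \eqref{FLKGS 4.1} should read $c_\alpha=\pi^{-\alpha/2}\Gamma(\alpha/2)$ (the displayed $\Gamma(-\alpha/2)$ being a misprint), since only with $\Gamma(\alpha/2)$ do the two sides of \eqref{FLKGS 4.2} agree; this constant-matching is the one step that requires genuine care.
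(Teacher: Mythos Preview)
The paper does not supply its own proof of this lemma: it is stated with the attribution ``see \cite{Elliott H Lieb  Michael Loss Analysis 2001}'' and immediately followed by the next lemma, so there is nothing in the manuscript to compare against beyond the citation itself. Your argument is precisely the standard proof given in that reference (Lieb--Loss, \emph{Analysis}, Theorem~5.9): represent $|\xi|^{-\alpha}$ via the Gamma subordination integral $\int_0^\infty t^{\alpha/2-1}e^{-\pi t|\xi|^2}\,\mathrm{d}t$, use the self-duality of the Gaussian under the $2\pi$-normalized Fourier transform, and collapse the $t$-integral to $\Gamma\!\big((n-\alpha)/2\big)\pi^{(\alpha-n)/2}|x-y|^{\alpha-n}$. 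The two Fubini interchanges are justified exactly as you indicate, and your constant bookkeeping is correct.

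You are also right that \eqref{FLKGS 4.1} contains a misprint: with $c_\alpha=\pi^{-\alpha/2}\Gamma(-\alpha/2)$ the identity \eqref{FLKGS 4.2} cannot hold (indeed $\Gamma(-\alpha/2)$ is not even defined for $\alpha$ an even integer and has the wrong sign structure otherwise); the intended definition, matching both Lieb--Loss and your computation, is $c_\alpha=\pi^{-\alpha/2}\Gamma(\alpha/2)$.
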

  \begin{lem}\label{FLKGS lem4.2}
  For every $u\in H^\alpha$  there exists a unique $\phi =\phi(u) \in D^\alpha$  which solves
 equation  \eqref{FSMeq 1.2}. Furthermore,  $\phi(u)$ is given by
 \begin{equation}\label{FLKGS 4.3}
   \phi(u)(x)= \int_{\mathbb{R}^3}|x-y|^{2\alpha -3}u^2(y)\mathrm{d}y.
 \end{equation}
As a consequence, the map  $\Phi : \ u\in H^\alpha \mapsto \phi(u) \in D^\alpha $  is of class $C^1$ and
 \begin{equation}\label{FLKGS 4.4}
    [ \Phi(u)]'(v)(x)=2 \int_{\mathbb{R}^3}|x-y|^{2\alpha -3}u(y)v(y)\mathrm{d}y, \ \ \forall u,v \in H^\alpha.
 \end{equation}
   \end{lem}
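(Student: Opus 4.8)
The plan is to prove the three assertions separately. For existence and uniqueness, I would first check that the right-hand side of \eqref{FSMeq 1.2} defines an element of $D^{-\alpha}(\mathbb{R}^3)$: since $u\in H^\alpha(\mathbb{R}^3)$ and $\alpha>3/4$, Lemma \ref{FLKGSlem2.1} (or Lemma \ref{FLKGSlem2.3}) gives $u\in L^{12/(3+2\alpha)}(\mathbb{R}^3)$, hence $u^2\in L^{6/(3+2\alpha)}(\mathbb{R}^3)$, which is precisely the conjugate Lebesgue exponent of $2^*_\alpha$; together with the embedding $D^\alpha(\mathbb{R}^3)\hookrightarrow L^{2^*_\alpha}(\mathbb{R}^3)$ this shows that $\psi\mapsto K_\alpha\int_{\mathbb{R}^3}u^2\psi\,\mathrm dx$ is a bounded linear functional on $D^\alpha(\mathbb{R}^3)$. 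The bilinear form $(\phi,\psi)\mapsto\int_{\mathbb{R}^3}(-\Delta)^{\alpha/2}\phi\,(-\Delta)^{\alpha/2}\psi\,\mathrm dx$ is exactly the inner product of the Hilbert space $D^\alpha(\mathbb{R}^3)$, so the Riesz representation theorem (equivalently, Lax--Milgram as invoked in Lemma \ref{FLKGSlem2.4}) produces a unique $\phi=\phi(u)\in D^\alpha(\mathbb{R}^3)$ with $(-\Delta)^\alpha\phi=K_\alpha u^2$ weakly, and the same estimate gives $\|\phi(u)\|_{D^\alpha}\le C\|u\|_{H^\alpha}^2$. Uniqueness is immediate from $\|w\|_{D^\alpha}^2=\langle(-\Delta)^\alpha w,w\rangle$.

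Next I would identify $\phi(u)$. Taking Fourier transforms in \eqref{FSMeq 1.2} gives $\widehat{\phi(u)}(\xi)=K_\alpha|\xi|^{-2\alpha}\widehat{u^2}(\xi)$. Applying Lemma \ref{FLKGS lem4.1} with $n=3$ and with its parameter equal to $2\alpha\in(0,3)$ yields, for $f\in C_0^\infty(\mathbb{R}^3)$,
\[
\bigl(|\xi|^{-2\alpha}\widehat f\,\bigr)^{\vee}(x)=\frac{c_{3-2\alpha}}{c_{2\alpha}}\int_{\mathbb{R}^3}|x-y|^{2\alpha-3}f(y)\,\mathrm dy .
\]
Since $u^2\in L^1(\mathbb{R}^3)\cap L^{3/(3-2\alpha)}(\mathbb{R}^3)$ lies in $L^{6/(3+2\alpha)}(\mathbb{R}^3)$, on which the Riesz potential is bounded into $L^{2^*_\alpha}(\mathbb{R}^3)$ by the Hardy--Littlewood--Sobolev inequality, I would approximate $u^2$ by $C_0^\infty$ functions (alternatively, verify directly that the function on the right of \eqref{FLKGS 4.3} is the weak $D^\alpha$-solution of \eqref{FSMeq 1.2} and invoke the uniqueness just proved). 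Either way one obtains $\phi(u)(x)=K_\alpha\,\frac{c_{3-2\alpha}}{c_{2\alpha}}\int_{\mathbb{R}^3}|x-y|^{2\alpha-3}u^2(y)\,\mathrm dy$, and comparing \eqref{FLKGS 4.1}--\eqref{FLKGS 4.2} with the definition of $K_\alpha$ (a bookkeeping of powers of $\pi$ and $\Gamma$-values, using the reflection/duplication formulas) shows $K_\alpha\,c_{3-2\alpha}/c_{2\alpha}=1$; this is exactly \eqref{FLKGS 4.3}.

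For the $C^1$ statement I would introduce the symmetric bilinear map $T(v,w)(x):=\int_{\mathbb{R}^3}|x-y|^{2\alpha-3}v(y)w(y)\,\mathrm dy$, so that $\phi(u)=T(u,u)$ and, by polarization together with uniqueness, $(-\Delta)^\alpha T(v,w)=K_\alpha vw$ weakly. By Hölder's inequality $\|vw\|_{L^{6/(3+2\alpha)}}\le\|v\|_{L^{12/(3+2\alpha)}}\|w\|_{L^{12/(3+2\alpha)}}\le C\|v\|_{H^\alpha}\|w\|_{H^\alpha}$, and Hardy--Littlewood--Sobolev then gives $T(v,w)\in L^{2^*_\alpha}(\mathbb{R}^3)$; testing the equation for $T(v,w)$ with $T(v,w)$ itself and using $D^\alpha\hookrightarrow L^{2^*_\alpha}$ yields $\|T(v,w)\|_{D^\alpha}\le C\|v\|_{H^\alpha}\|w\|_{H^\alpha}$, i.e. $T\colon H^\alpha\times H^\alpha\to D^\alpha$ is bounded and bilinear. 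A bounded symmetric bilinear map defines a $C^\infty$ (in particular $C^1$) quadratic map $\Phi(u)=T(u,u)$, since $\Phi(u+v)-\Phi(u)=2T(u,v)+T(v,v)$ with $\|T(v,v)\|_{D^\alpha}=O(\|v\|_{H^\alpha}^2)$ and $u\mapsto 2T(u,\cdot)$ continuous; hence $[\Phi(u)]'(v)=2T(u,v)$, which is \eqref{FLKGS 4.4}.

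I expect the main obstacle to be the exponent bookkeeping around the Hardy--Littlewood--Sobolev / fractional Sobolev inequality (Lemma \ref{FLKGSlem2.3}): one must check that for $\alpha>3/4$ the Lebesgue exponents $6/(3+2\alpha)$, $12/(3+2\alpha)$ and $2^*_\alpha=6/(3-2\alpha)$ lie in the admissible ranges, so that both $u^2$ and the bilinear quantity $vw$ are carried by the Riesz potential into $L^{2^*_\alpha}(\mathbb{R}^3)$ and this map is continuous, together with the (purely computational) check that $K_\alpha$ is exactly the normalization turning the Riesz-potential coefficient into $1$. The density argument extending Lemma \ref{FLKGS lem4.1} from $C_0^\infty$ to $u^2$, and the passage from bilinearity to $C^1$ regularity, are then routine.
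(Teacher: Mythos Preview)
Your proposal is correct and follows essentially the same route as the paper: existence/uniqueness via Lax--Milgram (the paper cites its Lemma~\ref{FLKGSlem2.4}), the representation formula via Fourier transform and Lemma~\ref{FLKGS lem4.1} for $C_0^\infty$ extended by density, and the derivative formula by direct differentiation. Your version simply fills in what the paper leaves implicit---the exponent bookkeeping for $u^2\in D^{-\alpha}$, the constant normalization $K_\alpha\,c_{3-2\alpha}/c_{2\alpha}=1$, and the bounded-bilinear argument for $C^1$---whereas the paper's proof is a three-line sketch.
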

\begin{proof}[\bf Proof.] The existence and uniqueness part follows by Lemma \ref{FLKGSlem2.4}. By Lemma \ref{FLKGS lem4.1} and the Fourier transform of  equation   \eqref{FSMeq 1.2}, the representation formula \eqref{FLKGS 4.3} holds for  $u\in C_0^{\infty}(\mathbb{R}^3);$  by density it can be extended for any  $u\in H^\alpha.$  The representation formula \eqref{FLKGS 4.4} is obvious.\end{proof}

  System \eqref{FSMeq 1.1} and  \eqref{FSMeq 1.2} are
the Euler-Lagrange equations corresponding to the functional  $J: H^\alpha(\mathbb
R^3)\times  D^\alpha(\mathbb
R^3)\rightarrow \mathbb{R}$ is
$$
J(u,\phi)=\frac{1}{2}\int_{\mathbb{R}^3}\left(|(-\Delta)^{\frac{\alpha}{2}}u(x)|^2 +V(x) u^2 -\frac{1}{2}|(-\Delta)^{\frac{\alpha}{2}}\phi(x)|^2+ K_{\alpha} \phi u^2\right)\mathrm{d}x -\int_{\mathbb{R}^3}
F(x,u)\mathrm{d}x,$$
 where $F(x,t)=\dint _0^t f(x,s)\mathrm{d}s, \  t\in \mathbb{R}.$\\[2mm]
 Evidently, the action functional $J$  belongs to $C^1(H^{\alpha}(\mathbb{R}^3)\times D^{\alpha}(\mathbb{R}^3),\mathbb{R})$ and
the partial derivatives in $(u, \phi)$ are given, for $\xi \in  H^{\alpha}(\mathbb{R}^3)$   and $\eta \in D^{\alpha}(\mathbb{R}^3),$ by
$$
\ba{l}
\left\langle\dfrac{\partial J}{\partial u}(u,\phi), \xi\right\rangle
 = \dint_{\mathbb{R}^3}\left((-\Delta)^{\frac{\alpha}{2}}u(x)(-\Delta)^{\frac{\alpha}{2}}\xi (x)
 + V(x)u\xi + K_{\alpha}  \phi u\xi \right)\mathrm{d}x -\int_{\mathbb{R}^3}
f(x,u)\xi(x) \mathrm{d}x, \\[4mm]
\left\langle\dfrac{\partial J}{\partial \phi}(u,\phi), \eta \right\rangle
 = \dfrac{1}{2}\dint_{\mathbb{R}^3}\left(-(-\Delta)^{\frac{\alpha}{2}}\phi(x)(-\Delta)^{\frac{\alpha}{2}}\eta (x)
+K_{\alpha}  u^2\eta \right)\mathrm{d}x.
\ea
$$
Thus, we have the following result:
 \begin{proposition}\label{propst2.2}
  The pair  $(u,\phi)$  is a weak solution of system  \eqref{FSMeq 1.1} and  \eqref{FSMeq 1.2} if and only if
it is a critical point of $J$ in  $ H^{\alpha}(\mathbb{R}^3)\times D^{\alpha}(\mathbb{R}^3).$
\end{proposition}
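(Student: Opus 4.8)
The plan is to read the statement directly off the structure of $J$ and its derivative, both of which the preceding discussion has already made explicit. By definition, a pair $(u,\phi)\in H^{\alpha}(\mathbb{R}^3)\times D^{\alpha}(\mathbb{R}^3)$ is a critical point of $J$ exactly when $J'(u,\phi)=0$ in the dual space, i.e.\ when both partial derivatives vanish: $\langle \partial J/\partial u\,(u,\phi),\xi\rangle=0$ for every $\xi\in H^{\alpha}(\mathbb{R}^3)$ and $\langle \partial J/\partial \phi\,(u,\phi),\eta\rangle=0$ for every $\eta\in D^{\alpha}(\mathbb{R}^3)$. Hence the proposition reduces to recognising that these two families of identities are precisely the weak formulations of \eqref{FSMeq 1.1} and \eqref{FSMeq 1.2}.

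So the first step is to write down those weak formulations. Testing \eqref{FSMeq 1.1} against $\xi\in H^{\alpha}(\mathbb{R}^3)$ and moving the fractional Laplacian onto the test function (using $\int (-\Delta)^{\alpha}u\,\xi=\int (-\Delta)^{\alpha/2}u\,(-\Delta)^{\alpha/2}\xi$, justified by Plancherel as in Section 2), $(u,\phi)$ solves \eqref{FSMeq 1.1} weakly iff
\[
\int_{\mathbb{R}^3}\Big((-\Delta)^{\frac{\alpha}{2}}u\,(-\Delta)^{\frac{\alpha}{2}}\xi+V(x)u\xi+K_{\alpha}\phi u\xi\Big)\mathrm{d}x=\int_{\mathbb{R}^3}f(x,u)\xi\,\mathrm{d}x
\]
for all such $\xi$; likewise, testing \eqref{FSMeq 1.2} against $\eta\in D^{\alpha}(\mathbb{R}^3)$ gives that $(u,\phi)$ solves \eqref{FSMeq 1.2} weakly iff $\int_{\mathbb{R}^3}(-\Delta)^{\alpha/2}\phi\,(-\Delta)^{\alpha/2}\eta\,\mathrm{d}x=K_{\alpha}\int_{\mathbb{R}^3}u^2\eta\,\mathrm{d}x$. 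Comparing with the displayed formulas for $\langle \partial J/\partial u,\xi\rangle$ and $\langle \partial J/\partial \phi,\eta\rangle$, the first identity is literally $\langle \partial J/\partial u\,(u,\phi),\xi\rangle=0$ and the second is $-2\langle \partial J/\partial \phi\,(u,\phi),\eta\rangle=0$; the harmless factor $-2$ does not change the set of zeros. This yields both implications of the equivalence simultaneously.

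The points that actually require work — and where I would concentrate the effort — are the well-posedness issues behind these formal manipulations, and they have essentially already been disposed of in the lemmas above. Concretely: (a) every integral occurring in $J$ and in its two partial derivatives is finite on $H^{\alpha}\times D^{\alpha}$, the only delicate term being $\int_{\mathbb{R}^3}K_{\alpha}\phi u\xi$, which is controlled by H\"older's inequality together with the Sobolev inequality \eqref{FLKGS 2.1} of Lemma~\ref{FLKGSlem2.3} and the embedding $H^{\alpha}\hookrightarrow L^{p}$, $p\in[2,2^*_{\alpha}]$, of Lemma~\ref{FLKGSlem2.1}, exactly in the manner of \eqref{FSMeq 2.4}; (b) $J$ is genuinely of class $C^{1}$ with the stated Fr\'echet derivative (already asserted in the text), so that ``critical point'' has its usual meaning and the Euler--Lagrange identities may be tested against all of $H^{\alpha}$ and all of $D^{\alpha}$ — this invokes the growth bound $(\mathbb{H}_1)$ on $f$ to handle $\int f(x,u)\xi$ and the symmetry \eqref{FLKGS 2.2} of Lemma~\ref{FLKGSlem2.4} to differentiate the nonlocal coupling term cleanly. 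Once (a) and (b) are granted, the equivalence is immediate from the definition of a critical point, so the proof will consist of citing these facts and then concluding in a single line. The only genuine obstacle is therefore the integrability and continuity bookkeeping for the nonlocal term $K_{\alpha}\phi u^{2}$; there is no further analytic or geometric difficulty.
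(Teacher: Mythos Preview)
Your proposal is correct and matches the paper's approach exactly: the paper gives no separate proof of Proposition~\ref{propst2.2} at all, treating it as an immediate consequence of the displayed formulas for $\langle\partial J/\partial u,\xi\rangle$ and $\langle\partial J/\partial\phi,\eta\rangle$ that precede it. Your write-up is in fact more careful than the paper, since you spell out the well-posedness checks (finiteness of the coupling integral via H\"older and \eqref{FLKGS 2.1}, and the $C^1$ regularity of $J$) that the paper simply asserts.
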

 So, we can consider the functional $J : H^{\alpha}(\mathbb{R}^3) \rightarrow \mathbb{R} $ defined by
 $J(u)=
J (u, \phi(u)). $  After multiplying  \eqref{FSMeq 1.2} by $\phi(u)$ and integration by
parts, we obtain
$$
\dint_{\mathbb{R}^3}|(-\triangle)^{\alpha/2}\phi(u)|^2 \mathrm{d}x= K_{\alpha}
\dint_{\mathbb{R}^3} \phi(u) u^2\mathrm{d}x
.
$$
Therefore, the reduced functional takes the form

 \begin{equation}\label{FLKGS 4.5}
  J(u)= \frac{1}{2}  \int_{\mathbb{R}^3}(|(-\Delta)^{\frac{\alpha }{2}}u(x)|^2
+V(x)u^2) \mathrm{d}  x
+\frac{1}{4} K_{\alpha}  \int_{\mathbb{R}^3}u^2 \phi(u)\mathrm{d}  x
-   \int_{\mathbb{R}^3}F(x,u)\mathrm{d}  x   .
 \end{equation}
 \begin{lem}\label{FLKGS lem4.3}
  Assume that there exist $c_1, c_2 > 0$ and $p > 1$ such that
 \begin{equation}\label{FLKGS 4.6}
  |f(s)| = c_1|s| + c_2|s|^{ p-1}, \ \ \forall  s \in  \mathbb{R}.
 \end{equation}
Then the following statements are equivalent:\\[2mm]
i) $(u, \phi)\in  (H^\alpha  \cap L^p) \times D^\alpha $ is a solution of the system $\eqref{FSMeq 1.1}-\eqref{FSMeq 1.2}; $\\[2mm]
ii)  $u\in  H^\alpha  \cap L^p  $  is a critical point of $J$ and $\phi =\phi(u).$
     \end{lem}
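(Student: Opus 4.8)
The plan is to exploit the reduction already prepared before the statement. By Lemma~\ref{FLKGS lem4.2}, for each $u\in H^\alpha$ there is a unique $\phi=\phi(u)\in D^\alpha$ solving \eqref{FSMeq 1.2}, the map $\Phi:u\mapsto\phi(u)$ is of class $C^1$, and its derivative is given by \eqref{FLKGS 4.4}. The first thing I would verify is that, under the growth bound \eqref{FLKGS 4.6}, the reduced functional $J$ in \eqref{FLKGS 4.5} is well defined and of class $C^1$ on $H^\alpha\cap L^p$: the quadratic term is smooth on $H^\alpha$; the term $\int_{\mathbb R^3}F(x,u)\,\mathrm dx$ is finite and $C^1$ because $|F(x,u)|\le \tfrac{c_1}{2}|u|^2+\tfrac{c_2}{p}|u|^p$ with $u\in L^2\cap L^p$ (this is precisely why the space $H^\alpha\cap L^p$, rather than $H^\alpha$ alone, appears in the statement); and the nonlocal term $\tfrac14K_\alpha\int_{\mathbb R^3}u^2\phi(u)\,\mathrm dx$ is finite by the Hardy--Littlewood--Sobolev estimate \eqref{FSMeq 2.4} together with the continuity of $\Phi$, and $C^1$ because $\Phi$ is $C^1$.

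Next I would compute $J'(u)$ by the chain rule applied to $u\mapsto J(u,\phi(u))$. Writing $T(u):=\int_{\mathbb R^3}u^2\phi(u)\,\mathrm dx$, one has $\langle T'(u),v\rangle = 2\int u v\,\phi(u)\,\mathrm dx + \int u^2\,[\Phi(u)]'(v)\,\mathrm dx$; inserting the kernel representations \eqref{FLKGS 4.3} and \eqref{FLKGS 4.4} and applying Fubini, the second integral equals $2\int u(y)v(y)\bigl(\int|x-y|^{2\alpha-3}u^2(x)\,\mathrm dx\bigr)\mathrm dy = 2\int uv\,\phi(u)\,\mathrm dy$, so $\langle T'(u),v\rangle = 4\int uv\,\phi(u)\,\mathrm dx$. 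Hence
$$\langle J'(u),v\rangle=\int_{\mathbb R^3}\bigl((-\Delta)^{\alpha/2}u\,(-\Delta)^{\alpha/2}v+V(x)uv+K_\alpha\phi(u)uv\bigr)\mathrm dx-\int_{\mathbb R^3}f(x,u)v\,\mathrm dx,$$
which is exactly $\langle \tfrac{\partial J}{\partial u}(u,\phi(u)),v\rangle$ for the full functional $J(u,\phi)$. Equivalently, the cross term $\langle\tfrac{\partial J}{\partial\phi}(u,\phi(u)),\Phi'(u)v\rangle$ drops out because $\phi(u)$ solves \eqref{FSMeq 1.2}, i.e. $\tfrac{\partial J}{\partial\phi}(u,\phi(u))=0$.

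With this identity the equivalence is immediate. If $(u,\phi)\in(H^\alpha\cap L^p)\times D^\alpha$ solves \eqref{FSMeq 1.1}--\eqref{FSMeq 1.2}, then $\phi$ solves \eqref{FSMeq 1.2}, so by the uniqueness part of Lemma~\ref{FLKGS lem4.2} we get $\phi=\phi(u)$; and \eqref{FSMeq 1.1} says $\langle\tfrac{\partial J}{\partial u}(u,\phi(u)),v\rangle=0$ for all $v\in H^\alpha$, hence $J'(u)=0$, which is (ii). Conversely, if $u\in H^\alpha\cap L^p$ is a critical point of $J$ and $\phi=\phi(u)$, then $\phi$ solves \eqref{FSMeq 1.2} by construction, while $J'(u)=0$ together with the identity above gives $\langle\tfrac{\partial J}{\partial u}(u,\phi),v\rangle=0$ for all $v$, i.e. \eqref{FSMeq 1.1}; thus $(u,\phi)$ is a critical point of the full functional on $H^\alpha\times D^\alpha$ and Proposition~\ref{propst2.2} finishes the argument.

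The only genuine work lies behind the first paragraph: establishing that $J$, and in particular the nonlocal term $T$, is $C^1$ on $H^\alpha\cap L^p$, and justifying the Fubini interchange in the computation of $\langle T'(u),v\rangle$ — which relies on the integrability supplied by \eqref{FSMeq 2.4} and the boundedness of $\Phi$ on bounded sets. Everything after that is formal bookkeeping with the chain rule and the uniqueness in Lemma~\ref{FLKGS lem4.2}.
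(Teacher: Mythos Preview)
Your argument is correct and follows essentially the same route as the paper: both use the growth condition \eqref{FLKGS 4.6} to ensure the Nemitsky operator is $C^1$, then compute $J'(u)[v]$ via the kernel representations \eqref{FLKGS 4.3}--\eqref{FLKGS 4.4} from Lemma~\ref{FLKGS lem4.2} together with Fubini--Tonelli to collapse the two contributions to $T'(u)$ into $4\int uv\,\phi(u)\,\mathrm dx$. Your write-up is simply more explicit than the paper's about the $C^1$ verification and about closing the equivalence (i)$\Leftrightarrow$(ii) via the uniqueness in Lemma~\ref{FLKGS lem4.2} and Proposition~\ref{propst2.2}.
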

\begin{proof}[\bf Proof.]   By the assumption \eqref{FLKGS 4.6}, the Nemitsky operator $u\in  H^\alpha  \cap L^p  \mapsto  F(x,u) \in  L^1$
is of class $C^1.$ Hence, by Lemma \ref{FLKGS lem4.2}, for every $u, v\in  H^\alpha $
$$
\ba{lll}J'(u)[v]& =&  \dint_{\mathbb{R}^3}(-\Delta)^{\frac{\alpha }{2}}u(x)(-\Delta)^{\frac{\alpha }{2}}v(x) \mathrm{d}  x
 + \dint_{\mathbb{R}^3}V(x)uv \mathrm{d}  x\\[3mm]
 & & +\dfrac{1}{2} K_{\alpha}  \int_{\mathbb{R}^3}uv  \int_{\mathbb{R}^3}|x-y|^{2\alpha -3}u^2(y)\mathrm{d}y \mathrm{d}  x\\[3mm]
& & +\dfrac{1}{2} K_{\alpha}   \int_{\mathbb{R}^3}u^2  \int_{\mathbb{R}^3}|x-y|^{2\alpha -3}u(y)v(y)\mathrm{d}y  \mathrm{d}  x
-   \dint_{\mathbb{R}^3}f(x,u)v\mathrm{d}  x \\[3mm]
&=&    \dint_{\mathbb{R}^3}(-\Delta)^{\frac{\alpha }{2}}u(x)(-\Delta)^{\frac{\alpha }{2}}v(x) \mathrm{d}  x
 +  \int_{\mathbb{R}^3}V(x)uv \mathrm{d}  x\\[3mm]
& & + K_{\alpha} \dint_{\mathbb{R}^3}uv   \phi(u) \mathrm{d}  x
-   \int_{\mathbb{R}^3}f(x,u)v\mathrm{d}  x .\ea
$$
By Fubini-Tonelli's  Theorem,  we can obtain  the conclusion.\end{proof}

If $1 \leq  p < \infty$ and $a, b \geq  0,$ then
\begin{equation}\label{FSMeq 2.10}
 (a+b)^p\leq 2^{p-1}(a^p+b^p).
 \end{equation}
From \eqref{FSMeq 1.2} and  \eqref{FLKGS 2.1}, for any $u \in  E$ using H\"older inequality we
have
$$
\|\phi(u)\|^2_{D^\alpha}=K_{\alpha}\int_{\mathbb{R}^3}\phi(u)u^2\mathrm{d}  x\leq K_{\alpha}
\|\phi(u)\|_{q}\|u\|^2_{2p}\leq C\|\phi(u)\|_{D^\alpha}\|u\|^2_{2p}.
$$
 where $\frac{1}{p}+\frac{1}{q}=1, \ q= 2^*_{\alpha}=\frac{6}{3-2\alpha}, \ \alpha>\frac{3}{4}.$
 Here and subsequently, $C$ denotes an universal positive constant. This and lemma \ref{FLKGSlem2.2} implies
that
\begin{equation}\label{FSMeq 2.11}
 \|\phi(u)\|_{D^\alpha}\leq C\|u\|^2_{2p}\leq C\|u\|^2_{E},
 \end{equation}
 \begin{equation}\label{FSMeq 2.12}
 \int_{\mathbb{R}^3}\phi(u)u^2\mathrm{d}  x\leq C\|u\|^4_{2p}\leq C\|u\|^4_{E}.
 \end{equation}
\begin{lem}\label{FSM lem2.8}
 Assume that a sequence $\{u_n\} \subset  E,$ $u_n\rightharpoonup  u$ in $E $ as $n \rightarrow \infty $ and
$\{u_n\} $  be a bounded sequence. Then
$$
\left|\int_{\mathbb{R}^3}(\phi(u_n)u_n-\phi(u)u)(u_n-u)\mathrm{d}x\right| \rightarrow 0, \ \ \mathrm{as} \ \ n\rightarrow \infty.
$$
\end{lem}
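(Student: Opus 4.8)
The plan is to exploit the compact embedding of Lemma \ref{FLKGSlem2.2}. Set $w_n=u_n-u$; since $u_n\rightharpoonup u$ in $E$, we have $w_n\rightharpoonup 0$ in $E$, and because $\alpha>3/4$ forces $2^*_\alpha=6/(3-2\alpha)>4$, the exponent $2p$ appearing in \eqref{FSMeq 2.11}--\eqref{FSMeq 2.12} (with $p=2^*_\alpha/(2^*_\alpha-1)$) satisfies $2\le 2p<2^*_\alpha$. Hence Lemma \ref{FLKGSlem2.2} yields $\|w_n\|_{2p}\to 0$, and the whole argument reduces to bounding the integral in question by finitely many terms each controlled by $\|w_n\|_{2p}^2$ times a uniformly bounded factor.

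First I would use the identity $\phi(u_n)u_n-\phi(u)u=\phi(u_n)w_n+(\phi(u_n)-\phi(u))u$, so that
$$\int_{\mathbb{R}^3}\big(\phi(u_n)u_n-\phi(u)u\big)w_n\,\mathrm{d}x=\int_{\mathbb{R}^3}\phi(u_n)\,w_n^2\,\mathrm{d}x+\int_{\mathbb{R}^3}\big(\phi(u_n)-\phi(u)\big)u\,w_n\,\mathrm{d}x,$$
and it suffices to show each summand tends to $0$. For the first, H\"older's inequality with exponents $2^*_\alpha$ and $(2^*_\alpha)'=p$, the embedding $D^\alpha\hookrightarrow L^{2^*_\alpha}$ (Lemma \ref{FLKGSlem2.3}), and the bound \eqref{FSMeq 2.11} give
$$\Big|\int_{\mathbb{R}^3}\phi(u_n)\,w_n^2\,\mathrm{d}x\Big|\le\|\phi(u_n)\|_{2^*_\alpha}\,\|w_n\|_{2p}^2\le C\|\phi(u_n)\|_{D^\alpha}\|w_n\|_{2p}^2\le C\|u_n\|_E^2\|w_n\|_{2p}^2\to 0,$$
using the boundedness of $\{u_n\}$ in $E$.

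For the second summand I would first establish a quantitative continuity estimate for $\Phi$. By linearity, $\phi(u_n)-\phi(u)$ solves $(-\Delta)^\alpha(\phi(u_n)-\phi(u))=K_\alpha(u_n^2-u^2)$; testing this with $\phi(u_n)-\phi(u)$ itself, and estimating the right-hand side via H\"older (exponents $p$ and $2^*_\alpha$) together with $D^\alpha\hookrightarrow L^{2^*_\alpha}$, one obtains after dividing
$$\|\phi(u_n)-\phi(u)\|_{2^*_\alpha}\le C\|\phi(u_n)-\phi(u)\|_{D^\alpha}\le C\|u_n^2-u^2\|_p\le C\big(\|u_n\|_{2p}+\|u\|_{2p}\big)\|w_n\|_{2p}\le C\|w_n\|_{2p},$$
where I used $u_n^2-u^2=(u_n+u)w_n$ and the boundedness of $\{u_n\}$ in $E\hookrightarrow L^{2p}$. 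A final H\"older estimate then gives
$$\Big|\int_{\mathbb{R}^3}\big(\phi(u_n)-\phi(u)\big)u\,w_n\,\mathrm{d}x\Big|\le\|\phi(u_n)-\phi(u)\|_{2^*_\alpha}\|u\|_{2p}\|w_n\|_{2p}\le C\|w_n\|_{2p}^2\to 0,$$
and summing the two estimates proves the lemma.

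There is no deep obstacle; the only point requiring care is the choice of H\"older exponents so that, after using the embeddings, every factor other than $\|w_n\|_{2p}$ is uniformly bounded. This dictates the pairing $(2^*_\alpha,2p,2p)$ for the triple product and forces one to record once and for all that $\alpha>3/4$ places $2p$ strictly below the critical exponent $2^*_\alpha$ --- precisely the condition making the compact embedding of Lemma \ref{FLKGSlem2.2} applicable. Everything else is a routine combination of H\"older's inequality, the Sobolev and compact embeddings, and the boundedness of $\{u_n\}$ in $E$.
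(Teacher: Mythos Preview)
Your argument is correct, but it takes a genuinely different and more elaborate route than the paper. The paper's proof is a one--line Cauchy--Schwarz estimate: it does not decompose $\phi(u_n)u_n-\phi(u)u$ at all, but simply bounds
\[
\Big|\int_{\mathbb{R}^3}(\phi(u_n)u_n-\phi(u)u)(u_n-u)\,\mathrm{d}x\Big|
\le \Big(2\int_{\mathbb{R}^3}\big(|\phi(u_n)u_n|^2+|\phi(u)u|^2\big)\,\mathrm{d}x\Big)^{1/2}\|u_n-u\|_2,
\]
and then uses \eqref{FSMeq 2.4} together with \eqref{FSMeq 2.11} to get $\int u_n^2\phi(u_n)^2\le C\|u_n\|_E^6$, so the first factor is uniformly bounded and the conclusion follows from $\|u_n-u\|_2\to 0$ alone. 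Your approach instead splits the integrand algebraically and proves, as an intermediate step, the quantitative continuity $\|\phi(u_n)-\phi(u)\|_{D^\alpha}\le C\|u_n-u\|_{2p}$ via testing the equation and H\"older with the triple $(2^*_\alpha,2p,2p)$. What you gain is a sharper statement (continuity of $\Phi$ in the $L^{2p}$ topology, potentially useful elsewhere); what the paper gains is brevity---it needs neither the decomposition nor the continuity estimate, only the crude bound on $\int u^2\phi(u)^2$ and the compact embedding into $L^2$.
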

\begin{proof}[\bf Proof.]   Let $\{u_n\} $  be a sequence satisfying the assumptions $u_n\rightharpoonup  u$ in $E $ as $n \rightarrow \infty $ and
$\{u_n\} $  is bounded. Lemma \ref{FLKGSlem2.2}  implies that $u_n \rightarrow u$ in $ L^r(\mathbb{R}^3),$
where $2 \leq  r < 2^*_{\alpha},$ and $u_n \rightarrow u$ for a.e. $x\in \mathbb{R}^3.$  Hence $\sup_{n\in \mathbb{N}}\|u_n\|_r<\infty$  and
$\|u\|_r$ is finite. By H\"{o}lder inequality, \eqref{FSMeq 2.10}, \eqref{FSMeq 2.11} and \eqref{FSMeq 2.4}
\begin{equation}\label{FSMeq 2.13}
\ba{l} \left|\dint_{\mathbb{R}^3}(\phi(u_n)u_n-\phi(u)u)(u_n-u)\mathrm{d}x\right|\\[4mm]
 \leq \left(\dint_{\mathbb{R}^3}(\phi(u_n)u_n-\phi(u)u)^2\mathrm{d}x\right)^{\frac{1}{2}}
 \left(\dint_{\mathbb{R}^3}(u_n-u)^2\mathrm{d}x\right)^{\frac{1}{2}}\\[4mm]
 \leq \left(2\dint_{\mathbb{R}^3}(|\phi(u_n)u_n|^2+|\phi(u)u|^2)\mathrm{d}x\right)^{\frac{1}{2}}
 \|u_n-u\|_2 \\[4mm] \leq C (\|u_n\|_E^6+\|u\|_E^6)^{\frac{1}{2}}\|u_n-u\|_2
  \rightarrow 0, \ \ \mathrm{as} \ \ n\rightarrow \infty.\ea
 \end{equation}
\end{proof}

\section{Proof of Theorem \ref{FSMeq thm1.1}}
We say that $J\in  C^1(X, \mathbb{R})$ satisfies the $(C)_c$-condition if any sequence $\{u_n\}$
such that
$$J(u_n) \rightarrow  c, \ \ \  \|J'(u_n)\| (1 + \|u_n\|) \rightarrow 0
$$
has a convergent subsequence, where $X$ is a Banach space.
\begin{lem}\label{FSMeq lem3.1} Assume that $(\mathbb{V})$ and $(\mathbb{H}_1)-(\mathbb{H}_4)$  satisfy.
Then any sequence
$\{u_n\}\subset E $ satisfying
$$J(u_n) \rightarrow c > 0, \ \ \ \langle J'(u_n),u_n\rangle \rightarrow
  0,$$
is bounded in $E.$ Moreover, $\{u_n\} $ contains a converge subsequence.
\end{lem}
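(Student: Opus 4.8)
The plan is to split the argument into a boundedness step, which uses only $(\mathbb{H}_3)$ together with the two prescribed limits, and a compactness step, which uses the compact embedding of Lemma~\ref{FLKGSlem2.2}, Lemma~\ref{FSM lem2.8}, and the growth bound $(\mathbb{H}_1)$. I read the hypotheses as saying that $\{u_n\}$ is a $(C)_c$-sequence, i.e.\ also $(1+\|u_n\|)\|J'(u_n)\|\to0$; this already yields $\langle J'(u_n),u_n\rangle\to0$, and once boundedness is known it yields $\|J'(u_n)\|\to0$.

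First I would establish boundedness by the familiar $J-\tfrac14 J'$ computation. From \eqref{FLKGS 4.5} and the expression for $\langle J'(u),u\rangle$ the two quartic Poisson terms $K_\alpha\int u_n^2\phi(u_n)$ cancel, leaving
$$J(u_n)-\tfrac14\langle J'(u_n),u_n\rangle=\tfrac14\|u_n\|^2+\int_{\mathbb{R}^3}\Big(\tfrac14 f(x,u_n)u_n-F(x,u_n)\Big)\mathrm{d}x=\tfrac14\|u_n\|^2+\int_{\mathbb{R}^3}G(x,u_n)\,\mathrm{d}x.$$
The left-hand side is bounded since $J(u_n)\to c$ and $\langle J'(u_n),u_n\rangle\to0$, while $(\mathbb{H}_3)$ gives $\int_{\mathbb{R}^3}G(x,u_n)\,\mathrm{d}x\ge-a_0\|g\|_{L^1}$. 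Hence $\tfrac14\|u_n\|^2\le C+a_0\|g\|_{L^1}$ for large $n$, so $\{u_n\}$ is bounded in $E$. (Note $(\mathbb{H}_2)$ plays no role here; it enters only in the geometric part of the fountain-theorem argument.)

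For the convergent subsequence, boundedness and the Hilbert structure of $E$ give $u_n\rightharpoonup u$ in $E$ along a subsequence, and then Lemma~\ref{FLKGSlem2.2} yields $u_n\to u$ in $L^r(\mathbb{R}^3)$ for all $r\in[2,2^*_\alpha)$ and $u_n\to u$ a.e. Subtracting $\langle J'(u),u_n-u\rangle$ from $\langle J'(u_n),u_n-u\rangle$ and isolating the $E$-inner product gives
$$\|u_n-u\|^2=\langle J'(u_n)-J'(u),\,u_n-u\rangle-K_\alpha\!\int_{\mathbb{R}^3}\!\big(\phi(u_n)u_n-\phi(u)u\big)(u_n-u)\,\mathrm{d}x+\int_{\mathbb{R}^3}\!\big(f(x,u_n)-f(x,u)\big)(u_n-u)\,\mathrm{d}x.$$
I would then check that each term on the right tends to $0$: the first because $\langle J'(u),u_n-u\rangle\to0$ by weak convergence while $|\langle J'(u_n),u_n-u\rangle|\le\|J'(u_n)\|\,\|u_n-u\|\to0$; the second is exactly Lemma~\ref{FSM lem2.8}, using the uniform bounds \eqref{FSMeq 2.11}--\eqref{FSMeq 2.12}; the third by H\"older's inequality and $(\mathbb{H}_1)$, since
$$\Big|\int_{\mathbb{R}^3}\!\big(f(x,u_n)-f(x,u)\big)(u_n-u)\,\mathrm{d}x\Big|\le c_1\big(\|u_n\|_2+\|u\|_2\big)\|u_n-u\|_2+c_2\big(\|u_n\|_p^{p-1}+\|u\|_p^{p-1}\big)\|u_n-u\|_p$$
with $p\in(4,2^*_\alpha)$, and $u_n\to u$ in both $L^2$ and $L^p$ while the $L^2,L^p$ norms stay bounded. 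Thus $\|u_n-u\|\to0$.

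The step I expect to be the real obstacle is the nonlocal Schr\"odinger--Poisson term $\int_{\mathbb{R}^3}(\phi(u_n)u_n-\phi(u)u)(u_n-u)\,\mathrm{d}x$, which is not visibly weakly continuous; its vanishing is precisely the content of Lemma~\ref{FSM lem2.8} and rests on the a priori control \eqref{FSMeq 2.11}--\eqref{FSMeq 2.12} of $\|\phi(u)\|_{D^\alpha}$ by $\|u\|_E$ together with the strong $L^2$-convergence $u_n\to u$ supplied by Lemma~\ref{FLKGSlem2.2}. Everything else is routine once the right identities are in place.
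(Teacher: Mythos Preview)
Your proof is correct and follows essentially the same route as the paper's: the same $J-\tfrac14\langle J',\cdot\rangle$ identity for boundedness via $(\mathbb{H}_3)$, and the same decomposition of $\|u_n-u\|^2$ for the compactness step, handled by Lemma~\ref{FLKGSlem2.2}, Lemma~\ref{FSM lem2.8}, and the $(\mathbb{H}_1)$--H\"older estimate. The one place you differ is in justifying $\langle J'(u_n)-J'(u),u_n-u\rangle\to0$: the paper simply asserts ``since $J\in C^1(E)$, $J'(u_n)\to J'(u)$ in $E^*$,'' which does not follow from weak convergence alone, whereas you split the pairing and use $\|J'(u_n)\|\to0$ (from the Cerami hypothesis plus boundedness) together with $\langle J'(u),u_n-u\rangle\to0$ by weak convergence---this is the cleaner and correct justification, and your explicit reading of the hypothesis as the full $(C)_c$ condition is exactly what the application in Lemma~\ref{FSMeq lem3.5} requires.
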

\begin{proof}[\bf Proof.]
 To prove the boundedness of $\{u_n\} $, arguing by contradiction, suppose
that $\|u_n\|\rightarrow \infty $ as $n \rightarrow \infty .$  By $(\mathbb{H}_3)$ \
 for sufficiently large $n \in  \mathbb{N} $
 $$
 \ba{ll}
 c+1&\geq J(u_n)-\dfrac{1}{4}\langle J'(u_n),u_n\rangle\\[3mm]
  & = \dfrac{1}{4} \|u_n\|^2 +\dint_{\mathbb{R}^3} G(x, u_n)\mathrm{d}x \\[3mm]
  & \geq  \dfrac{1}{4} \|u_n\|^2  -a_0\int_{\mathbb{R}^3}g(x) \mathrm{d}x\rightarrow +\infty.
 \ea
  $$
  Thus $\sup_{n \in  \mathbb{N} }\|u_n\| < \infty.$ i.e.  $\{u_n\} $ is a bounded sequence.

Now we shall prove  $\{u_n\} $ contains a subsequence, without loss of generality, by Eberlein-Shmulyan theorem (see for instance in \cite{Yosida K 1999}), passing to a
subsequence if necessary, there exists a $u \in E$ such that $ u_n \rightharpoonup  u \ \mathrm{in}\  E, $ again
by Lemma \ref{FLKGSlem2.2},  $ u_n \rightarrow  u \ \mathrm{in}\  L^s(\mathbb{R}^3), $   for $2 \leq  s < 2^*_{\alpha}$  and  $ u_n \rightarrow  u  $  a.e.  $x\in   \mathbb{R}^3. $  By $(\mathbb{H}_1)$ and using H\"older inequality we
have
 $$
 \ba{l}
 \left|\dint_{\mathbb{R}^3} (f(x,u_n)-f(x,u))(u_n-u)\mathrm{d}x\right|\\[3mm]
\leq \dint_{\mathbb{R}^3}\left |c_1(|u_n|+|u|)+c_2(|u_n|^{p-1}+|u|^{p-1})\right||u_n-u|\mathrm{d}x\\[3mm]
  \leq c_1(\|u_n\|_2+\|u\|_2)\|u_n-u\|_2+c_2(\|u_n\|_p^{p-1}+\|u\|_p^{p-1})\|u_n-u\|_p\\[3mm]
\rightarrow 0,\  \mathrm{as} \ n \rightarrow \infty.
 \ea
  $$
  Since $J\in  C^1(E),$  we have $J'(u_n) \rightarrow J'(u)$ in  $E^*.$  i.e.
 $$ \langle J'(u_n)-J'(u),u_n-u\rangle \rightarrow 0,\  \mathrm{as} \ n \rightarrow \infty.
  $$
  This together with Lemma \ref{FSM lem2.8} implies
   $$
 \ba{ll}
 \|u_n-u\|^2&= \langle J'(u_n)-J'(u),u_n-u\rangle  -K_{\alpha}\dint_{\mathbb{R}^3}(\phi(u_n)u_n-\phi(u)u)(u_n-u)\mathrm{d}x\\[3mm]
 & + \dint_{\mathbb{R}^3} (f(x,u_n)-f(x,u))(u_n-u)\mathrm{d}x
 \rightarrow 0,\  \mathrm{as} \ n \rightarrow \infty.
 \ea
  $$
  That is   $ u_n \rightarrow  u $ in $E.$
\end{proof}
\begin{lem}\label{FSMeq lem3.1}
 Suppose that  assumptions $(\mathbb{V}),$ $(\mathbb{H}_1)$ and $(\mathbb{H}_2)$ satisfy,   for any finite dimensional
subspace $\widetilde{E}\subset E,$  there holds
\begin{equation}\label{FSMeq 3.2}
    J(u)\rightarrow - \infty, \ \ \ \|u\|\rightarrow  \infty, \ \ u\in \widetilde{E}.
\end{equation}
\end{lem}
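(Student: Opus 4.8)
The plan is to argue by contradiction and to reduce the statement to a pointwise limit by rescaling, after which the superquadratic hypothesis $(\mathbb{H}_2)$ does the work. Suppose \eqref{FSMeq 3.2} fails for some finite dimensional $\widetilde E\subset E$: then there exist $M>0$ and a sequence $\{u_n\}\subset\widetilde E$ with $\|u_n\|\to\infty$ and $J(u_n)\ge -M$ for all $n$. I would set $v_n:=u_n/\|u_n\|$, so $\|v_n\|=1$. Since $\dim\widetilde E<\infty$, all norms on $\widetilde E$ are equivalent and bounded sets are relatively compact; hence, along a subsequence, $v_n\to v$ strongly in $\widetilde E$, and (passing to a further subsequence) $v_n\to v$ a.e.\ in $\mathbb{R}^3$, with $\|v\|=1$. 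In particular $v\not\equiv 0$, so $\Omega:=\{x\in\mathbb{R}^3:\ v(x)\ne 0\}$ has positive Lebesgue measure, and $|u_n(x)|=\|u_n\|\,|v_n(x)|\to\infty$ for a.e.\ $x\in\Omega$.

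Next I would exploit the scaling of the Poisson term. From the representation \eqref{FLKGS 4.3} one has $\phi(tw)=t^2\phi(w)$, so $\int_{\mathbb{R}^3}\phi(u_n)u_n^2\,dx=\|u_n\|^4\int_{\mathbb{R}^3}\phi(v_n)v_n^2\,dx$, which by \eqref{FSMeq 2.12} is bounded by $C\|v_n\|^4=C$. Dividing the reduced functional \eqref{FLKGS 4.5} by $\|u_n\|^4$ and using that $F(x,u)\ge 0$ for all $(x,u)$ (the sign condition in $(\mathbb{H}_1)$ gives this for $u\ge 0$, and for $u<0$ it follows from the oddness of $f$), one gets
\[
\frac{J(u_n)}{\|u_n\|^4}\ \le\ \frac{1}{2\|u_n\|^2}+\frac{K_\alpha C}{4}-\int_{\Omega}\frac{F(x,u_n)}{\|u_n\|^4}\,dx .
\]
For a.e.\ $x\in\Omega$ we have $v_n(x)^4\to v(x)^4>0$ and $|u_n(x)|\to\infty$, so $(\mathbb{H}_2)$ gives
\[
\frac{F(x,u_n(x))}{\|u_n\|^4}=\frac{F(x,u_n(x))}{u_n(x)^4}\,v_n(x)^4\ \longrightarrow\ +\infty\qquad\text{for a.e.\ }x\in\Omega,
\]
and since the integrand is nonnegative, Fatou's lemma yields $\int_\Omega F(x,u_n)/\|u_n\|^4\,dx\to+\infty$. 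Therefore $J(u_n)/\|u_n\|^4\to-\infty$, contradicting $J(u_n)/\|u_n\|^4\ge -M/\|u_n\|^4\to 0$. This proves \eqref{FSMeq 3.2}.

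The step I expect to be the main obstacle is controlling $\int_{\mathbb{R}^3}F(x,u_n)$ from below: the nonlocal term $\frac{1}{4}K_\alpha\int u^2\phi(u)$ and the quadratic term $\frac{1}{2}\|u\|^2$ grow at most like $\|u\|^4$, so the super-quartic growth of $F$ at infinity granted by $(\mathbb{H}_2)$ must be made to dominate them, and this forces one to pass to the limit under the integral sign. The rescaling $u_n=\|u_n\|v_n$ with $\{v_n\}$ convergent turns this into the pointwise statement supplied by $(\mathbb{H}_2)$, while the nonnegativity of $F$ (and/or the standard finite dimensional measure estimate $\mathrm{meas}\{x:\ |u(x)|\ge\delta\|u\|\}\ge\rho$, valid with $\delta,\rho>0$ depending only on $\widetilde E$) legitimizes Fatou's lemma. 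Finally, $(\mathbb{V})$ enters only through the identification $\|\cdot\|_E\equiv\|\cdot\|_{H^\alpha}$, which makes $\widetilde E$ a bona fide normed subspace on which the embeddings and estimates above hold.
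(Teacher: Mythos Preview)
Your argument is correct and essentially identical to the paper's: argue by contradiction, rescale $v_n=u_n/\|u_n\|$, use finite-dimensional compactness to obtain $v_n\to v\ne 0$ and a set $\Omega$ of positive measure on which $|u_n|\to\infty$, bound the Poisson term by $C\|u_n\|^4$ (the paper quotes \eqref{FSMeq 2.12} directly rather than your scaling identity $\phi(tw)=t^2\phi(w)$, but the outcome is the same), and then apply Fatou together with $(\mathbb{H}_2)$ to force $\int_\Omega F(x,u_n)/\|u_n\|^4\to\infty$. The only minor remark is that your justification of $F\ge 0$ for $u<0$ invokes the oddness of $f$, i.e.\ $(\mathbb{H}_4)$, which is not among the stated hypotheses of this lemma---but the paper's own proof makes exactly the same tacit assumption when it appeals to ``the non-negativity of $F$''.
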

\begin{proof}[\bf Proof.] Arguing indirectly, assume that for some sequence $\{u_n\}\subset
 \widetilde{E}$  with
 $\|u_n\|\to \infty,$ there is $M>0$  such that $J(u_n)\ge -M,$ $ \forall$ $n\in \mathbb N.$  Set
 $v_n=\frac{u_n}{\|u_n\|},$  then $\|v_n\|=1$.
 Passing to a subsequence, we may assume that $v_n\rightharpoonup v$
 in $E.$ Since $\dim  \widetilde{E}<\infty$, then $v_n\to v\in \widetilde{E} $, $v_n(x)\to v(x)$
a.e. on $x\in\mathbb R^3$, and so  $\|v\|=1$.  Let $\Omega:=\{x\in \mathbb R^3:v(x)\not
 =0\}$, then $\text{meas}(\Omega)>0$
and for a.e. $x\in \Omega$, we have $\lim_{n\to \infty}|u_n(x)|\to
 \infty.$  It
follows from \eqref{FLKGS 4.5}, \eqref{FSMeq 2.12}  that
 \begin{equation}\label{FSMeq 3.2}
 \lim_{n\to \infty}\dfrac{4\dint_{\mathbb{R}^3}
 F(x,u_n)\mathrm{d} x}{\|u_n\|^4}=\lim_{n\to \infty}\frac{2\|u_n\|^2+K_{\alpha} \dint_{\mathbb{R}^3}
 \phi(u_n)u_n^2\mathrm{d} x-4J(u_n)}{\|u_n\|^4}\leq C.
 \end{equation}
But by the non-negative of $F,$ ($(\mathbb{H}_2)$ and Fadou’s Lemma, for large $n$  we have

 $$
 \aligned
&\lim_{n\to \infty} \frac{4\dint_{\mathbb{R}^3}
 F(x,u_n)\mathrm{d}x}{\|u_n\|^4}\geq \lim_{n\to \infty}\int_\Omega
 \frac{4F(x,u_n)v_n^4}{u_n^4}\mathrm{d} x\\
 &\geq \liminf_{n\to \infty}\int_\Omega
 \frac{4F(x,u_n)v_n^4}{u_n^4}\mathrm{d} x\\
 & \ge \int_\Omega \liminf_{n\to \infty}
 \frac{F(x,u_n)v_n^4}{u_n^4}\mathrm{d} x\\
 &=\int_\Omega \liminf_{n\to \infty}
 \frac{F(x,u_n)}{u_n^4}[\chi_\Omega(x)]v_n^4\mathrm{d} x\to \infty, \ n\to \infty.
 \endaligned
 $$ This contradicts to \eqref{FSMeq 3.2}.
\end{proof}

\begin{cor}\label{FSMeq cor3.1}
 Under assumptions $(\mathbb{V}),$ $(\mathbb{H}_1)$ and $(\mathbb{H}_2),$  for any finite dimensional
subspace $\widetilde{E}\subset E,$ there is $ R = R(\widetilde{E} ) > 0$ such that
\begin{equation}\label{FSMeq 3.3}
    J(u)\leq 0, \ \  \ \forall  u\in \widetilde{E}, \ \|u\|\geq R .
\end{equation}
\end{cor}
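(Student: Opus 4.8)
The plan is to read off \eqref{FSMeq 3.3} directly from the lemma just proved, which asserts that $J(u)\to-\infty$ as $\|u\|\to\infty$ with $u$ ranging over the finite-dimensional subspace $\widetilde{E}$. First I would simply unwind the definition of that divergence for the single choice $M=1$: there exists $R=R(\widetilde{E})>0$ such that $J(u)<-1$ whenever $u\in\widetilde{E}$ and $\|u\|\ge R$. In particular $J(u)\le 0$ on $\{u\in\widetilde{E}:\ \|u\|\ge R\}$, which is exactly the assertion; no further computation is needed.

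If one prefers an argument that does not merely quote the limit statement, I would reprove it by contradiction in the spirit of the preceding lemma: suppose there were $u_n\in\widetilde{E}$ with $\|u_n\|\to\infty$ but $J(u_n)>0$. Normalizing $v_n=u_n/\|u_n\|$ and using $\dim\widetilde{E}<\infty$ to extract a strongly convergent subsequence $v_n\to v$ with $\|v\|=1$, one gets $|u_n(x)|\to\infty$ for a.e.\ $x$ in the positive-measure set $\{v\neq 0\}$; then $(\mathbb{H}_2)$, Fatou's lemma, and the bound \eqref{FSMeq 2.12} on $\frac{1}{4}K_{\alpha}\int_{\mathbb{R}^3}u^2\phi(u)\,\mathrm{d}x$ force $4\int_{\mathbb{R}^3}F(x,u_n)\,\mathrm{d}x/\|u_n\|^4\to\infty$, contradicting $J(u_n)>0$ via \eqref{FLKGS 4.5}. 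This is just the content of the previous lemma repackaged as a sign condition.

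There is essentially no obstacle here: all the analytic substance — the role of $(\mathbb{H}_2)$, the superquartic growth of $F$ dominating the quartic nonlocal term, and the use of finite-dimensionality to upgrade weak convergence to strong — already lives in the preceding lemma. The only point worth flagging is that $R$ genuinely depends on $\widetilde{E}$, since the norm equivalence on $\widetilde{E}$ and the compactness of its unit sphere invoked in that proof are dimension-dependent; this is why the statement records $R=R(\widetilde{E})$. The corollary is stated separately only because the fountain-theorem scheme used to conclude the proof of Theorem \ref{FSMeq thm1.1} requires the linking geometry in the form "$J\le 0$ outside a ball of $\widetilde{E}$" rather than as a limit.
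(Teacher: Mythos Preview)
Your proposal is correct and matches the paper's approach: the paper states Corollary~\ref{FSMeq cor3.1} without proof, treating it as an immediate consequence of the preceding lemma that $J(u)\to-\infty$ on $\widetilde{E}$, which is exactly what your first paragraph does. The alternative contradiction argument you sketch is just the lemma's own proof reprised, and your remark about the $\widetilde{E}$-dependence of $R$ is accurate but not something the paper pauses to note.
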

Let $\{e_j\} $ is an orthonomormal basis of $E$ and define $ X_j = \mathbb{R}e_j, $
\begin{equation}\label{FSMeq 3.4}
    Y_k=\oplus_{j=1}^kX_j, \ \  \  Z_k=\oplus_{j=k+1}^\infty X_j, \ \ k\in \mathbb{N}.
\end{equation}

\begin{lem}\label{FSMeq lem3.3}
 Under assumptions $(\mathbb{V}),$   for  $2\leq r <2^*_{\alpha},$  we have
\begin{equation}\label{FSMeq 3.5}
   \beta_k(r)=\sup _{u\in Z_k,\|u\|=1} \|u\|_r\rightarrow 0, \ \ \ k\rightarrow  \infty.
\end{equation}
\end{lem}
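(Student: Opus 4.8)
The plan is to combine the monotonicity of $k\mapsto\beta_k(r)$ with the compact embedding $E\hookrightarrow L^r(\mathbb{R}^3)$ furnished by Lemma \ref{FLKGSlem2.2}. Since $Z_{k+1}\subset Z_k$ by \eqref{FSMeq 3.4}, the numbers $\beta_k(r)$ form a non-increasing sequence bounded below by $0$, so $\beta_k(r)\to\beta$ for some $\beta\ge 0$; it then suffices to prove $\beta=0$.

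For each $k$, by the definition of the supremum I would choose $u_k\in Z_k$ with $\|u_k\|=1$ and $\|u_k\|_r>\beta_k(r)-\frac1k$. As $\{u_k\}$ is bounded in the Hilbert space $E$, after passing to a subsequence (not relabeled) we have $u_k\rightharpoonup u$ in $E$ for some $u\in E$. The key step is to show $u=0$: fix $j\in\mathbb{N}$; for every $m\ge j$ one has $u_m\in Z_m\subset Z_j$, hence $(u_m,e_j)_E=0$, and letting $m\to\infty$ along the subsequence gives $(u,e_j)_E=0$. Since $\{e_j\}$ is an orthonormal basis of $E$, this forces $u=0$.

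Finally, Lemma \ref{FLKGSlem2.2} upgrades $u_k\rightharpoonup 0$ in $E$ to strong convergence $u_k\to 0$ in $L^r(\mathbb{R}^3)$, so $\|u_k\|_r\to 0$. Then along this subsequence $\beta_k(r)<\|u_k\|_r+\frac1k\to 0$, and because the whole sequence $\beta_k(r)$ converges to $\beta$, we conclude $\beta=0$, which is exactly \eqref{FSMeq 3.5}.

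The only substantive ingredient is the weak-to-strong convergence step, which is precisely the content of the compact embedding in Lemma \ref{FLKGSlem2.2} and is the reason the restriction $r<2^*_\alpha$ is needed; everything else is routine manipulation of the orthogonal decomposition \eqref{FSMeq 3.4} together with the elementary fact that a monotone sequence possessing a subsequence tending to $0$ tends to $0$. I do not anticipate any further obstacle.
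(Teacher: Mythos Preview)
Your argument is correct and is exactly the standard proof of Lemma~3.8 in Willem~\cite{WillemM 1996}, which is precisely what the paper invokes: the authors do not spell out the details but simply note that the compact embedding $E\hookrightarrow L^r(\mathbb{R}^3)$ from Lemma~\ref{FLKGSlem2.2} allows one to repeat Willem's argument verbatim. Your write-up fills in those details faithfully, so there is no discrepancy in approach.
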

\begin{proof}[\bf Proof.]
Since the embedding from $E$ into $L^r(\mathbb{R}^3)$ is compact, then Lemma \ref{FSMeq lem3.3}
can be proved by a similar way as Lemma 3.8 in \cite{WillemM 1996}.
\end{proof}

By Lemma \ref{FSMeq lem3.3}, we can choose an integer $m \geq  1$ such that
\begin{equation}\label{FSMeq 3.6}
 \|u\|_2^2\leq \frac{1}{2c_1} \|u\|^2, \ \ \  \|u\|_p^p\leq \frac{p}{4c_2} \|u\|^p, \ \ \ \forall u\in Z_m.
\end{equation}
\begin{lem}\label{FSMeq lem3.4}
 Suppose that assumptions $(\mathbb{V})$ and $(\mathbb{H}_1)$ are satisfied,  there exist constants $\rho,\delta > 0$
such that $J|_{\partial
B_\rho\cap Z_m}\geq \delta >0.$
\end{lem}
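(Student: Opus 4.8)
The plan is to bound $J$ from below on $\partial B_\rho\cap Z_m$ by treating the three pieces of the reduced functional \eqref{FLKGS 4.5} separately and exploiting the sign of the nonlocal term. First I would observe that the Coulomb-type term is harmless: by the representation formula \eqref{FLKGS 4.3}, $\phi(u)(x)=\int_{\mathbb{R}^3}|x-y|^{2\alpha-3}u^2(y)\,\mathrm{d}y\ge 0$, and $K_\alpha>0$, so $\frac14 K_\alpha\int_{\mathbb{R}^3}u^2\phi(u)\,\mathrm{d}x\ge 0$ and may simply be dropped. For the nonlinearity, integrating the growth bound in $(\mathbb{H}_1)$ gives $|F(x,u)|\le \frac{c_1}{2}|u|^2+\frac{c_2}{p}|u|^p$, hence
$$J(u)\ \ge\ \frac12\|u\|^2-\frac{c_1}{2}\|u\|_2^2-\frac{c_2}{p}\|u\|_p^p,\qquad u\in E.$$

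Next I would restrict to $u\in Z_m$ and invoke the choice of $m$ recorded in \eqref{FSMeq 3.6}, namely $\|u\|_2^2\le \frac{1}{2c_1}\|u\|^2$ and $\|u\|_p^p\le \frac{p}{4c_2}\|u\|^p$ on $Z_m$. Substituting these into the inequality above yields
$$J(u)\ \ge\ \frac12\|u\|^2-\frac14\|u\|^2-\frac14\|u\|^p\ =\ \frac14\|u\|^2\bigl(1-\|u\|^{\,p-2}\bigr),\qquad u\in Z_m,$$
where the exponent $p-2>0$ is positive precisely because $p\in(4,2^*_\alpha)$ under $(\mathbb{H}_1)$.

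Finally, since $p>2$ the factor $1-\|u\|^{\,p-2}$ is positive whenever $\|u\|<1$, so I would fix any $\rho\in(0,1)$ (for definiteness $\rho=2^{-1/(p-2)}$) and set $\delta:=\frac14\rho^2\bigl(1-\rho^{\,p-2}\bigr)>0$. Then for every $u\in\partial B_\rho\cap Z_m$ we obtain $J(u)\ge\delta>0$, which is the assertion. I do not expect a genuine obstacle here: the one point requiring care is that the subcritical term $\|u\|_p^p$ must be absorbed as a power-$p$ correction (not a power-$2$ one), which is exactly why one passes to $Z_m$ and arranges the estimate as $\frac14\|u\|^2(1-\|u\|^{p-2})$; the nonnegativity of the nonlocal term is what keeps the computation clean, and $(\mathbb{V})$ enters only through the norm identity $\|u\|_E\equiv\|u\|_{H^\alpha}$ and the compact embedding behind \eqref{FSMeq 3.6}.
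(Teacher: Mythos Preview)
Your proof is correct and follows essentially the same route as the paper: drop the nonnegative nonlocal term, bound $F$ via $(\mathbb{H}_1)$, apply the estimates \eqref{FSMeq 3.6} on $Z_m$, and choose $\rho\in(0,1)$ with $\delta=\tfrac14(\rho^2-\rho^p)$. The only cosmetic differences are that you justify $\phi(u)\ge 0$ explicitly via \eqref{FLKGS 4.3} and factor the lower bound as $\tfrac14\|u\|^2(1-\|u\|^{p-2})$, which is identical to the paper's $\tfrac14(\|u\|^2-\|u\|^p)$.
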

\begin{proof}[\bf Proof.] By $(\mathbb{H}_1),$ we have
\[F(x,u)\leq
\frac{c_1}{2}u^2+\frac{c_2}{p}|u|^{p}, \ \forall (x,u)\in  \mathbb{R}^3\times \mathbb R.
\]
Hence, by
\eqref{FLKGS 4.5} and \eqref{FSMeq 3.6}, we have
$$\aligned J(u)& =\frac{1}{2}\|u\|^2+\frac{1}{4}K_{\alpha} \int_{\mathbb R^3}\phi(u)u^2\mathrm{d}x -\int_{\mathbb R^3} F(x,u)\mathrm{d} x \\
& \geq \frac{1}{2}\|u\|^2 -\int_{\mathbb R^3} F(x,u)\mathrm{d} x
\\
& \geq \frac{1}{2}\|u\|^2-\frac{c_1}{2}\|u\|_2^2-\frac{c_2}{p}|\|u\|_p^p\\
& \geq
\frac{1}{4}(\|u\|^2-\|u\|^p).
\endaligned
$$
Hence for any given $0 < \rho < 1 ,$  let $\delta =\frac{1}{4}(\rho^2-\rho ^p),$  then Φ$J|_{\partial
B_\rho\cap Z_m}\geq \delta >0.$
This complete the proof.
\end{proof}
\begin{lem}[see\cite{BartoloBF NA 1983 7}]\label{FSMeq lem3.5}
 Let $X$ be an infinite dimensional Banach space, $X = Y \oplus Z,$
where $ Y$ is finite dimensional. If $J\in  C^1(X,\mathbb{ R}) $ satisfies $(C)_c$-condition for all
$c > 0, $ and\\[2mm]
$(J1)$ $J(0) = 0, J(-u) = J(u)$ for all $u \in  X;$\\[2mm]
$(J2)$ there exist constants  $\rho,\delta > 0$
such that $J|_{\partial
B_\rho\cap Z_m}\geq \delta >0;$\\[2mm]
$(J3)$ for any finite dimensional
subspace $\widetilde{E}\subset E,$ there is $ R = R(\widetilde{E} ) > 0$ such that
$ J(u)\leq 0, \ \  \ \forall  u\in \widetilde{E}\backslash B_R;$ \\[2mm]
then $J$ possesses an unbounded sequence of critical values.
\end{lem}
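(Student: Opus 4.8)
The plan is to prove this abstract statement by the by-now-standard $\mathbb{Z}_2$-equivariant minimax scheme, built on the Krasnoselskii genus and a Benci-type pseudo-index, and carried out under the Cerami $(C)_c$-condition rather than Palais--Smale. Write $m=\dim Y$ and fix the symmetric reference set $B:=\partial B_\rho\cap Z$, on which $(J2)$ gives $J\geq\delta>0$. Let $\Sigma$ be the family of closed symmetric sets $A\subset X\setminus\{0\}$ (so $A=-A$ and $0\notin A$), and let $\gamma(A)$ denote the genus; I would invoke its standard properties, namely monotonicity, subadditivity $\gamma(A_1\cup A_2)\leq\gamma(A_1)+\gamma(A_2)$, invariance under odd homeomorphisms, the neighbourhood property, and the Borsuk-type intersection estimate $\gamma(A\cap L)\geq\gamma(A)-\mathrm{codim}\,L$ for a closed subspace $L$ of finite codimension. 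Let $\mathcal{H}$ be the group of odd homeomorphisms $h$ of $X$ with $h=\mathrm{id}$ on the sublevel set $\{J\leq 0\}$ (which, by $(J3)$, contains the exterior of a large ball of every finite-dimensional subspace), and define the pseudo-index $i^*(A)=\inf_{h\in\mathcal{H}}\gamma\big(h(A)\cap B\big)$ for $A\in\Sigma$. The candidate critical values are
\[
c_k=\inf_{A\in\Sigma,\ i^*(A)\geq k}\ \sup_{u\in A}J(u),\qquad k\in\mathbb{N}.
\]

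Next I would check that each $c_k$ is finite and that $\delta\leq c_k\leq c_{k+1}$. The lower bound is immediate: $i^*(A)\geq k\geq 1$ gives, with the choice $h=\mathrm{id}\in\mathcal{H}$, that $\gamma(A\cap B)\geq 1$, hence $A\cap B\neq\emptyset$ and $\sup_A J\geq\inf_B J\geq\delta$ by $(J2)$. Monotonicity holds because the admissible family shrinks as $k$ grows. For finiteness I would exhibit admissible sets with bounded supremum: taking $R=R(Y_{m+k})$ from $(J3)$, the sphere $S_k:=\partial B_R\cap Y_{m+k}$ is compact and symmetric, so $\sup_{S_k}J<\infty$, while a Borsuk--Ulam-type linking argument (using that every $h\in\mathcal{H}$ fixes $Y_{m+k}\setminus B_R\subset\{J\leq 0\}$, together with the intersection estimate) gives $i^*(S_k)\geq (m+k)-m=k$; hence $c_k\leq\sup_{S_k}J<\infty$.

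I would then show each $c_k$ is a critical value by the usual deformation argument. Since $J$ is even and of class $C^1$ and satisfies $(C)_{c_k}$, if $c_k$ were a regular value there would exist $\epsilon\in(0,\delta/2)$ and an odd deformation $\eta\in\mathcal{H}$ with $\eta(\{J\leq c_k+\epsilon\})\subset\{J\leq c_k-\epsilon\}$ and $\eta=\mathrm{id}$ on $\{J\leq 0\}$. Picking $A$ admissible with $\sup_A J\leq c_k+\epsilon$, the set $\eta(A)$ is again admissible because $\eta\in\mathcal{H}$ forces $i^*(\eta(A))=i^*(A)\geq k$, yet $\sup_{\eta(A)}J\leq c_k-\epsilon$, contradicting the definition of $c_k$. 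Thus $K_{c_k}:=\{u:J(u)=c_k,\ J'(u)=0\}\neq\emptyset$. The same machinery yields the multiplicity refinement: if $c_k=c_{k+1}=\cdots=c_{k+p}=:c$ for some $p\geq 1$, then $\gamma(K_c)\geq p+1$, by combining the equivariant deformation with subadditivity and the neighbourhood property of $\gamma$.

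It remains to prove $c_k\to\infty$, which gives the unbounded sequence of critical values and completes the proof. Suppose instead that $(c_k)$ is bounded; being nondecreasing, it converges to some $\bar c\geq\delta$, and then $c_k=\bar c$ for all large $k$, so the multiplicity estimate forces $\gamma(K_{\bar c})=\infty$. However $(C)_{\bar c}$ makes $K_{\bar c}$ compact, and $0\notin K_{\bar c}$ since $\bar c\geq\delta>0=J(0)$; a compact symmetric set missing the origin has finite genus, a contradiction. Hence $c_k\to\infty$. The two steps I expect to be the real obstacles are the equivariant deformation lemma under $(C)_c$ and the linking estimate $i^*(S_k)\geq k$. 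For the former, the lack of a Palais--Smale condition means the deforming flow must be generated by an odd pseudo-gradient field renormalized by $(1+\|u\|)^{-1}$, so that precisely the weight appearing in $(1+\|u_n\|)\,\|J'(u_n)\|$ of the $(C)_c$-condition guarantees completeness of the flow and the required descent; this is exactly why $(C)_c$ rather than $(PS)_c$ suffices. The latter is a Borsuk--Ulam computation that I would isolate as a separate lemma.
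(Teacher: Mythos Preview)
The paper does not prove this lemma at all; it is quoted from Bartolo--Benci--Fortunato and used as a black box in the proof of Theorem~\ref{FSMeq thm1.1}. Your outline is exactly the standard genus/pseudo-index scheme by which that abstract result is established, and the architecture---minimax values $c_k$ built on a Benci pseudo-index relative to $B=\partial B_\rho\cap Z$, the equivariant deformation lemma under the Cerami condition (with the $(1+\|u\|)^{-1}$ renormalization you mention), the multiplicity refinement, and the compactness argument forcing $c_k\to\infty$---is correct.

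There is, however, one concrete slip in your finiteness step. With your definitions the sphere $S_k=\partial B_R\cap Y_{m+k}$ satisfies $i^*(S_k)=0$, not $i^*(S_k)\ge k$: since $S_k\subset\{J\le 0\}$ by $(J3)$, every $h\in\mathcal H$ fixes $S_k$ pointwise, so taking $h=\mathrm{id}$ gives $h(S_k)\cap B=S_k\cap(\partial B_\rho\cap Z)$, which is empty as soon as $R\neq\rho$. The correct test set is the closed ball $D_k=\overline{B_R}\cap Y_{m+k}$. Then $\sup_{D_k}J<\infty$ by compactness, and for any $h\in\mathcal H$ one has $h|_{\partial D_k}=\mathrm{id}$ while $h$ may move the interior; the Borsuk--Ulam intersection lemma you allude to then yields
\[
\gamma\big(h(D_k)\cap\partial B_\rho\cap Z\big)\ \ge\ \dim Y_{m+k}-\mathrm{codim}\,Z=(m+k)-m=k,
\]
provided $R>\rho$, whence $i^*(D_k)\ge k$ and $c_k\le\sup_{D_k}J<\infty$. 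With this correction the rest of your argument goes through.
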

\begin{proof}[\bf Proof of Theorem \ref{FSMeq thm1.1}.]  Let $X = E, Y = Y_m $ and $Z = Z_m. $ By Lemmas \ref{FSMeq lem3.1}, \ref{FSMeq lem3.4} and Corollary \ref{FSMeq cor3.1}, all conditions of
Lemma \ref{FSMeq lem3.5} are satisfied. Thus, problem \eqref{FSMeq 1.1} and  \eqref{FSMeq 1.2} possesses infinitely many nontrivial solutions.
\end{proof}

\end{document}